\tikzset{->-/.style={decoration={
  markings,
  mark=at position #1 with {\arrow{>}}},postaction={decorate}}}  
\tikzset{-<-/.style={decoration={
  markings,
  mark=at position #1 with {\arrow{<}}},postaction={decorate}}}
\newcommand{\cA}{\mathcal{A}}
\newcommand{\cC}{\mathcal{C}}
\newcommand{\cD}{\mathcal{D}}
\newcommand{\cG}{\mathcal{G}}
\newcommand{\cS}{\mathcal{S}}
\newcommand{\cL}{\mathcal{L}}
\newcommand{\bZ}{\mathbb{Z}}
\newtheorem{theorem}{Theorem}[section]
\newtheorem{cor}[theorem]{Corollary}
\newtheorem{proposition}[theorem]{Proposition}
\newtheorem{lemma}[theorem]{Lemma}
\newtheorem{question}{Question}
\begin{document}
\title{On which groups can arise as the canonical group of a spherical latin bitrade}
\author{Kyle Bonetta-Martin\thanks{Research supported by a London Mathematical Society Undergraduate Research Bursary, grant number URB 15--49.} \ and Thomas A. McCourt}
\date{\small Department of Mathematics and Statistics, Plymouth University, Drake Circus, Plymouth PL4 8AA.
\medskip
\\
\small Keywords: 
	Spherical latin bitrade; 
    canonical group;
    abelian sand-pile group.\\
\small Mathematics Subject Classification: 05C10, 05C25, 05B15, 05C20.
}

\maketitle

\setcounter{footnote}{1}

\begin{abstract}
We address a question of Cavenagh and Wanless asking: which finite abelian groups arise as the canonical group of a spherical latin bitrade? We prove the existence of an infinite family of finite abelian groups that do not arise as canonical groups of spherical latin bitrades. 
Using a connection between abelian sandpile groups of digraphs underlying directed Eulerian spherical embeddings, we go on to provide several, general, families of finite abelian groups that do arise as canonical groups. These families include:
\begin{itemize}
\item any abelian group in which each component of the Smith Normal Form has composite order;
\item any abelian group with Smith Normal Form $\mathbb{Z}^{n}_p\oplus\left(\bigoplus_{i=1}^k\mathbb{Z}_{pa_i}\right)$, where $1\leq k$, $2\leq a_1,a_2,\ldots, a_k,p$ and $n\leq 1+2\sum_{i=1}^k(a_i - 1)$; and
\item with one exception and three potential exceptions any abelian group of rank two.
\end{itemize} 
\end{abstract}

\section{Introduction}

Given two latin squares of the same order a latin trade describes the differences between them. Early motivation \cite{DraKep1} for their study arose from considering the differences between the operation tables of a finite group and a latin square of the same order, that is: what is the `distance' between a group and a latin square? 
The study of the topological and geometric properties of latin trades has lead to significant progress towards understanding such differences, see for example \cite{CavWan, DraHamKal, BlackburnTMcC, TMcC, Szabados}, also see \cite{Cav-surv} for a survey of earlier results. 

Given a latin trade it may  be the case that the constituent partial latin squares are not `contained' (do not embed) in any group operation table, \cite{CavWan}. Hence, it is desirable to identify those that are. Connected latin bitrades of maximum size, equivalently spherical latin bitrades provide a family of latin bitrades for which the constituent partial latin squares do embed. We are interested in the `minimal group' that such constituent partial latin squares embed in, and indeed what groups arise as such minimal groups.

\subsection{Spherical latin bitrades}

A \textit{partial latin square} $P$ is an $\ell\times m$ array, in which the cells either contain an element of a set $S$ of symbols or are empty, such that each row and each column contains each of the symbols of $S$ at most once. Without loss of generality we let $S=\{s_1,s_2,\ldots,s_n\}$ and index the rows and columns by the sets $R=\{r_1,r_2,\ldots,r_\ell\}$ and $C=\{c_1,c_2,\ldots,c_m\}$ respectively (we may assume that each symbol in $S$ occurs at least once in the array and the rows of $R$ and columns of $C$ are all nonempty). As such a partial latin square $P$ can be considered to be a subset of $R\times C\times S$ such that if $(r_1,c_1,s_1)$ and $(r_2,c_2,s_2)$ are distinct triples in $P$, then at most one of $r_1=r_2$, $c_1=c_2$ and $s_1=s_2$ holds. 

A \textit{latin bitrade} is an ordered pair, $(W,B)$ say, of non-empty partial latin squares such that for each triple $(r_i,c_j,s_k)\in W$ (respectively $B$) there exists unique $r_{i'}\neq r_i$, $c_{j'}\neq c_j$ and $s_{k'}\neq s_k$ such that 
$$\big\{(r_{i,'}c_j,s_k),(r_i,c_{j'},s_k),(r_i,c_j,s_{k'})\big\}\subset B\text{ (respectively $W$)}.$$
Note that $(W,B)$ is a latin bitrade if and only if $(B,W)$ is also a latin bitrade. The \textit{size} of such a latin bitrade is $|W|$ (equivalently $|B|$). A latin bitrade $(W,B)$ for which there does not exist any latin bitrade $(W',B')$ such that $W'\subsetneq W$ and $B'\subsetneq B$ is said to be \textit{connected}.

Let $(W,B)$ be a latin bitrade; for each row, $r$ say, of $(W,B)$ a permutation $\rho_r$ of the symbols in row $r$ can be defined by $\rho_r(s)=s'$ if and only if $(r,c,s)\in W$ and $(r,c,s')\in B$ for some $c$ in $C$. A row $r$ for which $\rho_r$ is comprised of a single cycle is said to be \textit{separated}. Similar definitions hold for separated columns and separated symbols. A latin bitrade in which each row, each column and each symbol is separated is called a \textit{separated latin bitrade}. 
Suppose that $(W,B)$ is a latin bitrade which is not separated. Then replacing each non-separated row $x$ (respectively column, symbol) by new rows (respectively columns, symbol) for each of the cycles in $\rho_x$ we obtain a separated latin bitrade. See the survey paper \cite{Cav-surv} for further details and discussion.

A connected latin bitrade $(W,B)$ can be used to construct a face two-coloured triangulation $\cG_{W,B}$ of a pseudo-surface $\Sigma$ in which the vertex set is $R\sqcup C\sqcup S$ and there is an edge between a pair of vertices if and only if the vertices occur together in a triple of $W$ (equivalently a triple of $B$). For each triple $(r,c,s)\in W$ a white triangular face with vertices $r,c,s$ is constructed and for each $(r',c',s')\in B$ a black triangular face with vertices $r',c',s'$ is constructed. As $(W,B)$ is a bitrade the graph underlying $\cG_{W,B}$ is simple, and as $(W,B)$ is connected $\cG_{W,B}$ is also connected. The pseudo-surface $\Sigma$ is a true surface if the rotation at each vertex is a full rotation; this occurs if and only if $(W,B)$ is separated (in which case each row, column or symbol permutation corresponds to the rotation at the corresponding vertex). If $\Sigma$ is not a surface, then replacing each pinch point of multiplicity $t$ with a $t$ vertices, one on each of the sheets at the pinch point, corresponds to the above construction taking a non-separated bitrade to a separated one. As the triangulation $\cG_{W,B}$ is face two-coloured and the underlying graph is vertex three-coloured it follows, see the proof of Theorem 10.1 in \cite{GrannellGriggs}, that $\cG_{W,B}$ is orientable.

The \textit{genus} of a separated connected latin bitrade is the genus of the surface obtained in the above manner; in particular separated connected latin bitrades of genus zero are referred to as \textit{spherical latin bitrades}. Note that for any connected latin bitrade of size $\ell$ we have that $|R|+|C|+|S|\leq \ell+2$, with equality if and only if the bitrade is a spherical latin bitrade, see \cite{BlackburnTMcC}. That is, spherical latin bitrades are the connected latin bitrades of minimal size (with respect to the sum of the number of rows, columns and symbols).

In \cite{CavLis} Cavenagh and Lison\v{e}k prove the following result.

\begin{theorem}[Cavenagh \& Lison\v{e}k, \cite{CavLis}]
\label{thm:CavLis}
Spherical latin bitrades are equivalent to spherical Eulerian triangulations whose underlying graphs are simple.
\end{theorem}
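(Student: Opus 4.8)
The plan is to prove the stated equivalence by giving a construction in each direction, checking that each produces an object of the required type, and then observing that the two constructions are mutually inverse up to the natural relabellings. In the forward direction almost all the work is already recorded in the excerpt: given a spherical latin bitrade $(W,B)$, the associated face two-coloured triangulation $\cG_{W,B}$ is, because $(W,B)$ is separated and of genus zero, a genuine triangulation of the sphere whose underlying graph is simple and connected and whose vertex set $R\sqcup C\sqcup S$ is a proper $3$-colouring. The only extra observation needed is that $\cG_{W,B}$ is Eulerian: at any vertex $v$ the incident faces form a single cyclic sequence (a full rotation), consecutive faces of which share an edge through $v$ and therefore receive opposite colours, so this sequence has even length and $\deg(v)$ is even. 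Hence $(W,B)\mapsto\cG_{W,B}$ lands among spherical Eulerian triangulations with simple underlying graphs.

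For the converse, start from a spherical Eulerian triangulation $\cT$ with simple underlying graph. First produce a proper two-colouring of its faces into, say, white and black: since every vertex of $\cT$ has even degree, every face of the planar dual $\cT^{\ast}$ has even length, so $\cT^{\ast}$ is bipartite, and a proper $2$-colouring of the vertices of $\cT^{\ast}$ is exactly a proper $2$-face-colouring of $\cT$. Second --- and this is the one genuinely external ingredient --- invoke the classical fact (Heawood) that an Eulerian plane triangulation is properly $3$-vertex-colourable, the colouring being unique up to permuting the three colours when $\cT$ is connected; fix such a colouring and name the colour classes $R$, $C$ and $S$. Because $\cT$ is a triangulation and the colouring is proper, every face has exactly one vertex in each of $R,C,S$; put each white face into $W$ as a triple $(r,c,s)$ and each black face into $B$.

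It then remains to verify that $(W,B)$ is a spherical latin bitrade. That $W$ and $B$ are partial latin squares follows because two distinct faces of the same colour cannot be adjacent and so share at most one vertex, i.e.\ their triples agree in at most one coordinate. For the bitrade axioms, fix $(r,c,s)\in W$: the edge $cs$ lies in exactly two faces, the given white one and a unique black one, whose third vertex $r'$ satisfies $r'\neq r$ since the graph is simple, giving $(r',c,s)\in B$; the edges $rs$ and $rc$ similarly yield unique black triples $(r,c',s)$ and $(r,c,s')$ with $c'\neq c$ and $s'\neq s$; and these three are the \emph{only} triples of $B$ sharing two coordinates with $(r,c,s)$, because any such triple is a black face on the corresponding edge, of which there is exactly one. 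The symmetric statement with $W$ and $B$ interchanged is identical. Connectedness of $\cT$ gives connectedness of $(W,B)$, and since $\cT$ is an honest triangulation of the sphere the rotation at every vertex is full, so $(W,B)$ is separated and of genus zero --- a spherical latin bitrade. Finally the two constructions are inverse to one another up to permuting the roles of $R,C,S$, relabelling symbols within each class, and swapping the two face colours (which swaps $W$ and $B$), which is the sense in which the equivalence holds. I expect the main obstacle to be exactly this last bookkeeping: converting the local incidence data of the triangulation (even links that alternate between two colour classes, each edge in exactly two oppositely coloured faces) into the precise existence-and-uniqueness clauses of the latin bitrade definition, together with a clean invocation of $3$-colourability of Eulerian plane triangulations.
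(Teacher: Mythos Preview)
The paper does not give a proof of this statement at all: it is quoted from \cite{CavLis} and used as a black box, so there is nothing in the paper to compare against. Your argument is correct and is essentially the standard one. The forward direction is, as you note, already contained in the paper's discussion preceding the theorem; your only addition --- that each vertex has even degree because the incident faces alternate black and white around the full rotation --- is the right observation. For the converse you correctly invoke the two external facts the paper itself records (bipartiteness of the dual gives the face $2$-colouring; Heawood \cite{Hea} gives the vertex $3$-colouring), and your verification of the partial latin square and bitrade axioms is sound. One small point worth making explicit in your write-up: when you argue that two same-coloured faces share at most one vertex, the step ``share two vertices $\Rightarrow$ adjacent'' uses simplicity of the underlying graph (so that the two shared vertices determine a unique edge, which then lies in both triangular faces); you use simplicity again for $r'\neq r$, etc., so it is doing real work in both places.
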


Note that an Eulerian graph that has an embedding in the sphere is necessarily vertex three-colourable \cite{Hea}. It is not hard to generalise Theorem \ref{thm:CavLis} to surfaces of higher genus, however as face two-coloured triangulations of surfaces of higher genus may not be vertex three-colourable, an additional condition is required.

\begin{cor}
Separated connected latin bitrades of genus $g$ are equivalent to vertex three-colourable Eulerian triangulations of genus $g$ whose underlying graphs are simple. 
\end{cor}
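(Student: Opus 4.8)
The plan is to reuse the argument Cavenagh and Lison\v{e}k give for Theorem \ref{thm:CavLis} and to notice that genus zero enters that argument in exactly one place: to conclude, via Heawood's theorem, that the triangulation under consideration is vertex three-colourable. Replacing that appeal by the explicit three-colourability hypothesis yields the corollary. For the forward implication, let $(W,B)$ be a separated connected latin bitrade of genus $g$. The construction recalled above produces the face two-coloured triangulation $\cG_{W,B}$ of an orientable surface of genus $g$ (a genuine surface, precisely because $(W,B)$ is separated), whose underlying graph is simple and connected. The partition $R\sqcup C\sqcup S$ of the vertex set is a proper three-colouring, since every triangle has one vertex of each of the three types; and $\cG_{W,B}$ is Eulerian, because consecutive faces in the rotation about any vertex share an edge, every edge lies on exactly one white and one black face, so the faces about each vertex alternate in colour and there is an even number of them, forcing every degree to be even.

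For the converse, let $\cT$ be a vertex three-colourable Eulerian triangulation of genus $g$ with simple underlying graph, and fix a proper three-colouring of its vertices with colour classes $R,C,S$. Since a surface of genus $g$ is orientable, fix an orientation, and colour a triangle white or black according as the cyclic order its orientation induces on its vertices visits the classes $R,C,S$ or $R,S,C$. Because the two faces on any edge traverse that edge in opposite directions, faces sharing an edge receive opposite colours, so this is a proper face two-colouring. Reading each white triangle as the triple $(r,c,s)$ with $r\in R$, $c\in C$, $s\in S$ and collecting these to form $W$, and doing the same with the black triangles to form $B$, simplicity of the graph makes $W$ and $B$ partial latin squares (two triangles through a common pair of vertices are just the two faces on the edge joining them, hence of opposite colour), and the local picture at the three edges of a white triangle $\{r,c,s\}$ exhibits the unique triples $(r',c,s),(r,c',s),(r,c,s')$ of $B$ demanded by the bitrade axiom (and symmetrically with the two colours interchanged), so $(W,B)$ is a latin bitrade. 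Connectedness of $\cT$ gives connectedness of $(W,B)$; the rotation at each vertex of $\cT$ is a single full cycle since $\cT$ triangulates a genuine surface, so every row, column and symbol is separated; and applying the first construction to $(W,B)$ returns $\cT$, so the genus is $g$. The two constructions are mutually inverse up to relabelling and up to the interchange $W\leftrightarrow B$ (equivalently, reversal of orientation), which is the asserted equivalence.

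The only point with genuine content beyond Theorem \ref{thm:CavLis} is the observation that in positive genus an Eulerian triangulation need not be vertex three-colourable, so three-colourability has to be assumed rather than deduced; once it is available, everything else is the planar argument of \cite{CavLis} re-run edge by edge. (In fact, on the orientable surfaces at issue a vertex three-colourable triangulation is automatically Eulerian, by the face two-colouring just built, so the Eulerian clause in the statement is harmless but redundant; I would retain it for symmetry with the genus-zero case.) I expect the only slightly delicate verification to be the bitrade axiom in the converse direction — i.e. checking that the three "opposite" triangles at a white triangle are distinct and uniquely determined — but this is immediate from simplicity of the graph together with $W$ and $B$ being partial latin squares.
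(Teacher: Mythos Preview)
Your proposal is correct and matches the paper's approach exactly. The paper does not give an explicit proof of this corollary; it simply remarks that Theorem~\ref{thm:CavLis} generalises to higher genus once vertex three-colourability is added as a hypothesis (since Heawood's theorem is no longer available), which is precisely the strategy you spell out in detail.
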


\subsection{Embeddings of latin bitrades into abelian groups}

Two partial latin squares are said to be \textit{isotopic} if they are equal up to a relabelling of their sets of rows, columns and symbols. A partial latin square $P$, with row set $R$, column set $C$ and symbol set $S$, is said to \textit{embed in an 
abelian group $\Gamma$} if there exist injective maps $\phi_1:R\rightarrow \Gamma$, $\phi_2:C\rightarrow \Gamma$ and $\phi_3:S\rightarrow \Gamma$ such that $\phi_1(r)+\phi_2(c)=\phi_3(s)$ for all $(r,c,s)\in P$. In other words $P$ is isotopic to a partial latin square contained in the operation table of $\Gamma$. See Figure \ref{fig:Kyle-example} for an example. 

By defining $\phi|_R=\phi_1$, $\phi|_C=\phi_2$, and $\phi|_S=-\phi_3$ it follows, see \cite{BlackburnTMcC}, that $P$ embeds in an abelian group $\Gamma$ if and only if there exists a function $\phi:R\sqcup C\sqcup S\rightarrow \Gamma$ that is injective when restricted to each of $R$, $C$ and $S$ and is such that $\phi(r)+\phi(c)+\phi(s)=0$ for all $(r,c,s)\in P$. The map $\phi$ is called an \textit{embedding} of $P$.
An abelian group $\Gamma$ is said to be a \textit{minimal abelian representation} of a partial latin square $P$ if $P$ embeds in $\Gamma$ and the image of $\phi$ generates $\Gamma$ for all embeddings $\phi$ of $P$ in $\Gamma$. 

\begin{figure}[!h]
\begin{center}
\begin{tabular}{ccc}
\begin{tabular}{|ccc|}
\hline
$a$ & $b$ & $c$ \\
$c$ &  & $a$ \\
& $a$ & $b$ \\
\hline
\end{tabular}
&$\qquad$ &
\begin{tabular}{c|cccc|}
 $+$ & $0$ & $1$ & $2$ & $3$ \\
\hline
$0$ & $0$ & $\mathbf{1}$ & $\mathbf{2}$ & $\mathbf{3}$ \\
$1$ & $1$ & $2$ & $3$ & $0$ \\
$2$ & $2$ & $\mathbf{3}$ & $0$ & $\mathbf{1}$ \\
$3$ & $3$ & $0$ & $\mathbf{1}$ & $\mathbf{2}$ \\
\hline
\end{tabular}
\end{tabular}
\end{center}
\caption{The partial latin square above left embeds into $\bZ_4$ as illustrated by the bold faced entries in the operation table of $\bZ_4$, above right.}
\label{fig:Kyle-example}
\end{figure}

Two partial latin squares are said to be \textit{conjugate} if they are equal up to permutations of the roles of rows, columns and symbols. Two partial latin squares, say $P$ and $Q$, for which a partial latin square isotopic to $P$ is conjugate to a partial latin square isotopic to $Q$ are said to be 
in the same \textit{main class}. Note that if a partial latin square $P$ has an embedding in an abelian group $\Gamma$, every partial latin square in the same main class as $P$ also has an embedding in $\Gamma$. 

As we are interested in embeddings (into abelian groups) of partial latin squares (and given that if a partial latin square $P$ embeds in an abelian group $\Gamma$, so does any partial latin square isotopic to $P$) from here on we will assume that the row, column and symbol sets of a partial latin square are pairwise disjoint. 

In \cite{CavDra} Cavenagh and Dr\'apal asked the following questions ``Can the individual partial latin squares of a connected separated latin bitrade be embedded into the operation table of an abelian group? If this is not true in general is it true for spherical latin bitrades?''. The case of spherical latin bitrades was solved by Cavenagh and Wanless in \cite{CavWan} and independently by Dr\'apal, H\"am\"al\"ainen and Kala in \cite{DraHamKal}. Cavenagh and Wanless \cite{CavWan} also showed that separated connected latin bitrades of higher genus exist for which the constituent partial latin squares do not embed in any group. Hence our focus on spherical latin bitrades.

Let $P$ be a partial latin square with row set $R$, column set $C$ and symbol set $S$. Let $V=R\cup C\cup S$ and define an abelian group $\cA_P$ with generating set $V$ subject to the relations $\{r+c+s=0:(r,c,s)\in P\}$. 
Note that, if $P$ and $Q$ are two partial latin squares in the same main class, then $\cA_P\cong \cA_Q$. Also, note that two partial latin squares, $P$ and $Q$, from different main classes may also satisfy $\cA_P\cong \cA_Q$ (see Figure 2  in \cite{TMcC}).

The group $\cA_P$ has the `universal' property that any minimal abelian representation of $P$ is a quotient of $\cA_P$, \cite{DraKep}, also see \cite{BlackburnTMcC}.  Moreover $\cA_P$ is of the form $\bZ\oplus\bZ\oplus\cC_P$, again see \cite{BlackburnTMcC}. Dr\'apal et al \cite{DraHamKal} and Cavenagh and Wanless \cite{CavWan} proved that $\cC_W$ is finite when $(W,B)$ is a spherical latin bitrade. So in this case $\cC_W$ is the torsion subgroup of $\cA_W$.
Cavenagh and Wanless conjectured that $\cC_W\cong \cC_B$ (and hence $\cA_W\cong \cA_B$), \cite{CavWan}, also see \cite{Kou,BCC}. This is indeed the case. 
\begin{theorem}[Blackburn \& McCourt \cite{BlackburnTMcC}]
\label{thm:BlackburnTMcC}
Let $(W,B)$ be a spherical latin bitrade, then $\cA_W\cong\cA_B\cong\bZ\oplus\bZ\oplus\cC$, where $\cC$ is finite.
\end{theorem}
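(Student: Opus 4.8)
The plan is to reduce the statement to an isomorphism $\cC_W\cong\cC_B$ and to prove the latter by computing Smith normal forms on the common spherical triangulation $\cG_{W,B}$.

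\medskip
\textbf{Reduction.} By the results quoted above, $\cA_W\cong\bZ\oplus\bZ\oplus\cC_W$ and $\cA_B\cong\bZ\oplus\bZ\oplus\cC_B$ with $\cC_W,\cC_B$ finite (recall $(B,W)$ is a latin bitrade whenever $(W,B)$ is), so it suffices to produce an isomorphism $\cC_W\cong\cC_B$. Write $V=R\sqcup C\sqcup S$ and recall that $\cA_W=\operatorname{coker}(M_W)$, where $M_W\colon\bZ^{F_W}\to\bZ^{V}$ sends each white triangle $(r,c,s)$ of the spherical Eulerian triangulation $\cG_{W,B}$ (Theorem~\ref{thm:CavLis}), whose white triangle set is $F_W$, to $r+c+s$; similarly for $B$ with its black triangles $F_B$. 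Since $\cA_W$ has free rank $2$, the matrix $M_W$ has full column rank $\ell$, and $\cC_W$ is its torsion cokernel — hence $\cC_W$ is determined by the Smith normal form of $M_W$, equivalently by the integers $\Delta_1,\dots,\Delta_\ell$, where $\Delta_k$ is the gcd of the $k\times k$ minors of $M_W$. I want to show $\Delta_k(M_W)=\Delta_k(M_B)$ for every $k$.

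\medskip
\textbf{A common receptacle.} The two linear maps $\bZ^{V}\to\bZ$, $x\mapsto\sum_{r\in R}x_r-\sum_{c\in C}x_c$ and $x\mapsto\sum_{c\in C}x_c-\sum_{s\in S}x_s$, annihilate every relator $r+c+s$; their common kernel $$\Lambda=\Big\{x\in\bZ^{V}:\textstyle\sum_{r\in R}x_r=\sum_{c\in C}x_c=\sum_{s\in S}x_s\Big\}$$ is a rank-$\ell$ direct summand of $\bZ^V$ containing $\operatorname{im}(M_W)$ with finite index, and it depends only on the vertex three-colouring, so it is the \emph{same} lattice for $W$ and for $B$. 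Splitting $0\to\Lambda/\operatorname{im}(M_W)\to\cA_W\to\bZ^V/\Lambda\to0$ (the quotient $\bZ^V/\Lambda\cong\bZ^2$ being free) identifies $\cC_W\cong\Lambda/\operatorname{im}(M_W)$ and likewise $\cC_B\cong\Lambda/\operatorname{im}(M_B)$. Moreover, fixing an orientation of the sphere and using that $\cG_{W,B}$ is vertex three-coloured, one checks (propagating across the connected dual graph) that every white triangle, traversed positively, meets its vertices in one fixed cyclic colour-order and every black triangle in the opposite one; this lets $M_W$ and, up to sign, $M_B$ be factored through the simplicial boundary $\partial_2\colon\bZ^{F_W\sqcup F_B}\to\bZ^{E}$ of $\cG_{W,B}$ (with edge set $E$) and a single colour-weighted augmentation $\psi\colon\bZ^{E}\to\bZ^{V}$. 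Since $\cG_{W,B}$ is a triangulated $S^2$, one has $\ker\partial_2=\bZ\cdot\sum_f f$ and $\operatorname{im}\partial_2=\ker\partial_1$, whence $\operatorname{im}(M_W)$ and $\operatorname{im}(M_B)$ are finite-index sublattices of $\Lambda$ whose sum is $\psi(\ker\partial_1)$ and whose intersection contains $\sum_{v\in V}d_v\,v$, where $d_v$ is the (common) number of white and of black triangles at $v$.

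\medskip
\textbf{The crux, and where I expect trouble.} This configuration is suggestive — once one verifies $\psi(\ker\partial_1)=\Lambda$, second-isomorphism bookkeeping gives $\cC_W\cong\operatorname{im}(M_B)/(\operatorname{im}M_W\cap\operatorname{im}M_B)$ and $\cC_B\cong\operatorname{im}(M_W)/(\operatorname{im}M_W\cap\operatorname{im}M_B)$ — but it is not conclusive: abstractly one can build lattices with $L_1+L_2=M$, $L_1\cap L_2=I$ and $M/L_1\not\cong M/L_2$, so sphericity of $\cG_{W,B}$ must genuinely be used. The route I would push through is a direct combinatorial evaluation of the Smith normal form. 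A maximal minor of $M_W$ is obtained by discarding two vertices, and by the Leibniz expansion it equals a signed count of the families that assign to each white triangle one of its three vertices, bijectively onto the remaining $\ell$ vertices. Using the plane embedding I would peel white triangles off in the cyclic order around a spanning "white disc" of $\cG_{W,B}$ (equivalently, reduce along a spanning tree of the face-adjacency graph) and show that this signed count — and, more generally, each $\Delta_k(M_W)$ — telescopes to an expression in data that are manifestly symmetric under $W\leftrightarrow B$: the multiset $(d_v)_{v\in V}$ together with the cycle structures of the row, column and symbol permutations $\rho_x$ of $(W,B)$. That yields $\Delta_k(M_W)=\Delta_k(M_B)$ for all $k$, hence $\cC_W\cong\cC_B$ and $\cA_W\cong\cA_B\cong\bZ\oplus\bZ\oplus\cC$ with $\cC=\cC_W$. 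The hard part is exactly making this peeling rigorous: controlling the signs and cancellations in the minor expansion so that the planar structure forces the answer to depend only on the symmetric data. An attractive alternative — probably the cleanest way to organise the same idea — is to run the argument inside the equilateral-triangle-dissection model of a spherical latin bitrade, in which $\cC_W$ is recovered from the integer side-lengths of the pieces and the interchange $W\leftrightarrow B$ corresponds to a reflection of the dissection.
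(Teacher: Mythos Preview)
The paper does not prove this theorem; it is quoted from \cite{BlackburnTMcC} and used as input. So there is no ``paper's own proof'' to compare against. The paper does, however, indicate (in the paragraph following Lemma~\ref{lem:gobackwards}) the mechanism behind the result in \cite{BlackburnTMcC,TMcC}: one identifies $\cC_W$ and $\cC_B$ with the abelian sandpile group $\cS(D_I)$ of any one of Tutte's three digraphs $D_R,D_C,D_S$ built from the single triangulation $\cG_{W,B}$. Since these digraphs depend only on $\cG_{W,B}$ and the vertex three-colouring, not on which of $W,B$ one started from, the isomorphism $\cC_W\cong\cC_B$ follows once the identification $\cC_W\cong\cS(D_I)$ is in place.

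Your proposal takes a different route --- a direct comparison of the Smith invariants of $M_W$ and $M_B$ --- and you yourself flag that it is incomplete. The reduction to $\cC_W\cong\cC_B$ and the construction of the common lattice $\Lambda$ are fine, and the observation that $\operatorname{im}M_W$ and $\operatorname{im}M_B$ sit symmetrically inside $\Lambda$ via the simplicial boundary is correct. But the step you label ``the crux'' is not carried out: you assert that a planar peeling will make each $\Delta_k(M_W)$ telescope to a $W\leftrightarrow B$-symmetric expression, yet provide neither the peeling nor the sign control, and you rightly note that the lattice-theoretic skeleton alone (common sum and a common sublattice of the intersection) does not force $M/L_1\cong M/L_2$. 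Until either the peeling argument or the dissection-reflection argument is actually executed, this is a strategy rather than a proof. The sandpile route bypasses exactly the difficulty you anticipate: instead of tracking cancellations in minors, it exhibits a single object that is manifestly the same for $W$ and for $B$.
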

The group $\cC$ in Theorem \ref{thm:BlackburnTMcC} is referred to as the \textit{canonical group} of the spherical latin bitrade (see \cite{GruWan, TMcC}).

In \cite{CavWan} Cavenagh and Wanless asked the following question.
\begin{question}
\label{ques:main}
Which abelian groups arise as the canonical group of a spherical latin bitrade?\footnote{Cavenagh and Wanless actually asked this for the finite torsion subgroup of $\cA_W$ as Theorem \ref{thm:BlackburnTMcC} was not established at the time.}
\end{question}
It is this question that we address in this paper.
For any cyclic group $\bZ_n$ the existence of spherical latin bitrades whose canonical group is isomorphic to $\bZ_n$ was established by Cavenagh and Wanless in \cite{CavWan}. They also noted that no spherical latin bitrade exists whose canonical group is isomorphic to $\bZ_2\oplus\bZ_2$.

Given a face 2-coloured triangulation of the sphere in which the underlying graph is not necessarily simple and leaving the definitions of $\cA_W$ and $\cA_B$ unchanged it is still the case that $\cA_W\cong\cA_B\cong\bZ\oplus\bZ\oplus\cC$ where $\cC$ is finite \cite{BlackburnTMcC}. In \cite{TMcC} the second author showed that given any finite abelian group $\Gamma$ there exists a face 2-coloured triangulation of the sphere whose canonical group is isomorphic to $\Gamma$. However, unless $\Gamma$ is a cyclic group the triangulations constructed have underlying graphs that are not simple.

In Section \ref{sec:exist} we prove the existence of several, general, infinite families of abelian groups  that arise as canonical groups of spherical latin bitrades. Before doing so, we first prove that there exist infinitely many abelian groups that do not arise as the canonical group of any spherical latin bitrade. 

\begin{theorem}
\label{thm:non-existence}
There does not exist a spherical latin bitrade whose canonical group is isomorphic to $\bZ_2^k$ for any $k\geq 2$.
\end{theorem}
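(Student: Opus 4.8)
The plan is to work with the presentation of $\cA_W$ that comes directly from the associated triangulation. By Theorem~\ref{thm:CavLis} a spherical latin bitrade corresponds to a spherical Eulerian triangulation $\cG$ with simple underlying graph; write $V$ for its vertex set, $n=|V|$, and recall (Euler's relation, since $\cG$ is face two-coloured) that there are exactly $n-2$ white triangles. Let $M$ be the $(n-2)\times n$ incidence matrix of white triangles against vertices, so $\cA_W=\operatorname{coker}(M^{\mathsf T})$, $M$ has rank $n-2$ over $\mathbb Q$, and $\cC$ is the torsion of this cokernel. The first step is a purely algebraic reduction. Let $r$ be the $\mathbb F_2$-dimension of the left kernel of $M$; this equals the number of invariant factors of $\cC$ of even order, so that $|\cC|\ge 2^{\,r}$ always, with equality \emph{if and only if} $\cC$ is elementary abelian $2$-group of rank $r$. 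Hence the theorem is equivalent to the single implication: if $r\ge 2$ then $|\cC|>2^{\,r}$.

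The second step is to unwind what $r\ge2$ means combinatorially. A left-kernel vector of $M$ over $\mathbb F_2$ is an \emph{even family} $\cF$ of white triangles --- one meeting every vertex an even number of times --- and each nonzero such $\cF$ yields a nonzero $2$-torsion element $x_{\cF}=\bigl[\tfrac12\sum_{\triangle\in\cF}\mathbf 1_\triangle\bigr]\in\cC$; the map $\cF\mapsto x_\cF$ is injective and $\mathbb F_2$-linear, so $r$ is precisely the number of independent even families, and when $\cC\cong\bZ_2^k$ these families are in bijection with $\cC$. In parallel, because $(W,B)$ is a bitrade I would bring in the black triangles: sending a black triangle $rcs'$ to $\beta(rcs')=r+c+s'\in\cA_W$ gives, by the white relation on the $rc$-edge, the difference $s'-s$ of two symbols, hence an element of $\cC=\ker(\cA_W\to\cA)$; telescoping around each vertex shows $\sum_{b\ni v}\beta(b)=0$ for all $v\in V$, and (using connectivity of $\cG$: adjacent faces of $\cG$ lie in the same block of any putative disconnection of the symbol/row/column graphs) the values of $\beta$ generate $\cC$. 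Thus under the hypothesis the coordinate functions of $\beta$ recover a basis of even families of \emph{black} triangles, and the white even-family space, the black even-family space, the map $\beta$, and $\cC$ itself become tightly linked through the topology of the sphere.

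The crux --- and the step I expect to be the main obstacle --- is to extract a contradiction from $r\ge2$, i.e.\ to exhibit in $\cC$ an element of order $4$ or an element of odd order. The obstruction is genuinely integral rather than modulo $2$: elementary-abelian-ness of $\cC$ is invisible in $M\bmod 2$ alone (for example $\bZ_4\oplus\bZ_4$ does occur as a canonical group and has exactly the same mod-$2$ data as the forbidden $\bZ_2\oplus\bZ_2$), so the argument must feed the sphere topology, or the isomorphism $\cA_W\cong\cA_B$, into the purely linear picture above. Concretely I would try to show that two independent even families $\cF_1,\cF_2$ of white triangles, together with the way white triangles sit in $S^2$ (they are pairwise edge-disjoint, so each such family is a union of corner-touching triangles, and the black triangles tile the complement), force a further configuration whose associated integer combination of relations is divisible by $2$ but not by $4$ --- upgrading one of the order-$2$ relations to order $4$. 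An alternative, possibly cleaner, route is a minimal-counterexample argument: show that the smallest spherical Eulerian triangulation with $\cC$ elementary abelian of rank $\ge2$ can be reduced (by an operation on Eulerian triangulations preserving sphericity, simplicity, and the relevant torsion) to a strictly smaller one with the same property, contradicting the fact that the octahedron, with $\cC\cong\bZ_2$, is the unique minimal case. Making either version rigorous is the heart of the matter.
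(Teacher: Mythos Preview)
Your proposal is not a proof but a plan, and you say so yourself: ``Making either version rigorous is the heart of the matter.'' Everything up through the identification of the $\mathbb F_2$-left-kernel dimension $r$ with the number of even invariant factors of $\cC$, and the reduction of the theorem to the implication ``$r\ge 2\Rightarrow |\cC|>2^r$'', is correct and nicely set up. But after that you offer only two sketches --- forcing an order-$4$ element via some unspecified integer combination, or an unspecified reduction on Eulerian spherical triangulations --- neither of which you carry out. As you already note, the mod-$2$ data alone cannot distinguish $\bZ_2\oplus\bZ_2$ from $\bZ_4\oplus\bZ_4$, so the argument \emph{must} invoke sphericity in some concrete way, and you have not said how.

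The paper's proof is completely different and dramatically shorter. It is a local argument: pick a vertex $r_1$ of degree four (one exists by Euler's formula since every vertex has degree $\ge 4$), write out the two white and two black triangles at $r_1$ and the four adjacent triangles across the outer edges, and then use a single identity valid in $\bZ_2^k$ --- namely, that $u+w=y$, $v+w=z$, $v+x=y$ together force $u+x=z$ --- applied to the embeddings of $W$ and of $B$ into $\bZ_2^k$. This forces coincidences among the four ``outer'' row-vertices $x_1,x_2,x_3,x_4$: the $W$-embedding gives $x_2=x_4$, the $B$-embedding gives $x_1=x_3$. If all four coincide, one finds a sub-bitrade equal to the whole (by connectedness), and its canonical group is $\bZ_2$, contradicting $k\ge 2$. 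If not, the six vertices $r_1,x_1,x_2,c_1,s_1,s_2$ carry a $K_{3,3}$ in the underlying simple graph, contradicting planarity. That's the entire argument; the ``sphericity'' enters only as planarity via Kuratowski, and the algebraic input is just the $\bZ_2^k$-identity above.

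So the gap in your approach is precisely the step you flagged: you never produce the extra torsion. You might be able to push one of your two routes through, but it would be substantially harder than what is actually needed.
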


\begin{proof}
In the following we will make repeated use of the fact that for $u,v,w,x,y,z\in \bZ_2^k$, if $u+w=y$, $v+w=z$ and $v+x=y$, then $u+x=z$.

Let $k\geq 2$ and suppose that $(W,B)$ is a spherical latin bitrade whose canonical group is isomorphic to $\bZ_2^k$. So, by Theorem \ref{thm:BlackburnTMcC}, both $W$ and $B$ embed in $\bZ^k_2$. 

Recall that we may assume that the row, column and symbol sets of $W$ (and of $B$) are pairwise disjoint; denote them, respectively, by $R=\{r_1,r_2,\ldots, r_\ell\}$, $C=\{c_1,c_2,\ldots, c_m\}$ and $S=\{s_1,s_2,\ldots, s_n\}$. Let $\cG_{W,B}$ be the related triangulation and $G$ be the underlying graph of this triangulation. As $\cG_{W,B}$ has a proper face 2-colouring, $G$ is Eulerian, and, as $(W,B)$ is a latin bitrade, the minimum degree of $G$ is at least four. Moreover, $\cG_{W,B}$ is a triangulation of the sphere, so, by Euler's formula, $G$ contains at least six vertices of degree four. 

As spherical latin bitrades in the same main class all have isomporphic canonical groups, without loss of generality, we may assume that the degree of $r_1$ is four, and $(r_1,c_1,s_1),(r_1,c_2,s_2)\in B$ and $(r_1,c_1,s_2), (r_1,c_2,s_1)\in W$ where $c_1\neq c_2$ and $s_1\neq s_2$.
Hence, as $(W,B)$ is a latin bitrade, there exist $x_1,x_2,x_3,x_4\in R\setminus\{r_1\}$ such that $(x_1,c_2,s_1),(x_3,c_1,s_2)\in B$ and $(x_2,c_1,s_1),(x_4,c_2,s_2)\in W$ (see Figure \ref{fig:CaseC} for an illustration of the corresponding faces).

\begin{figure}[!hb]
\begin{center}
\scalebox{0.9}
{\begin{tikzpicture}[fill=gray!50, scale=1, vertex/.style={circle,inner sep=2,fill=black,draw}, dot/.style={circle,inner sep=0.7,fill=black,draw}]

\coordinate (r1) at (2,2);
\coordinate (c1) at (3,3);
\coordinate (s1) at (1,3);
\coordinate (c2) at (1,1);
\coordinate (s2) at (3,1);
\coordinate (x1) at (4,2);
\coordinate (x2) at (2,4);
\coordinate (x3) at (0,2);
\coordinate (x4) at (2,0);


\filldraw[color=gray!50] (r1) -- (c1) -- (s1) -- cycle;
\filldraw[color=gray!50] (r1) -- (c2) -- (s2) -- cycle;
\filldraw[color=gray!50] (x1) -- (c1) -- (s2) -- cycle;
\filldraw[color=gray!50] (x3) -- (c2) -- (s1) -- cycle;


\draw[thick] (x1) -- (x2) -- (x3) -- (x4) -- cycle;
\draw[thick] (c1) -- (s1) -- (c2) -- (s2) -- cycle;
\draw[thick] (s1) -- (s2);
\draw[thick] (c1) -- (c2);


\node at (3.6,3.85) [dot]{};
\node at (3.75,3.75) [dot]{};
\node at (3.85,3.6) [dot]{};

\node at (3.6,0.15) [dot]{};
\node at (3.75,0.25) [dot]{};
\node at (3.85,0.4) [dot]{};

\node at (0.15,3.6) [dot]{};
\node at (0.25,3.75) [dot]{};
\node at (0.4,3.85) [dot]{};

\node at (0.15,0.4) [dot]{};
\node at (0.25,0.25) [dot]{};
\node at (0.4,0.15) [dot]{};


\node at (r1) [vertex,label=east:$r_1$]{};
\node at (c1) [vertex,label=north east:$c_1$]{};
\node at (s1) [vertex,label=north west:$s_1$]{};
\node at (c2) [vertex,label=south west:$c_2$]{};
\node at (s2) [vertex,label=south east:$s_2$]{};
\node at (x1) [vertex,label=east:$x_1$]{};
\node at (x2) [vertex,label=north:$x_2$]{};
\node at (x3) [vertex,label=west:$x_3$]{};
\node at (x4) [vertex,label=south:$x_4$]{};

\end{tikzpicture}}
\end{center}
\caption{The vertex $r_1$ and nearby faces in $\cG_{W,B}$.}
\label{fig:CaseC}
\end{figure}
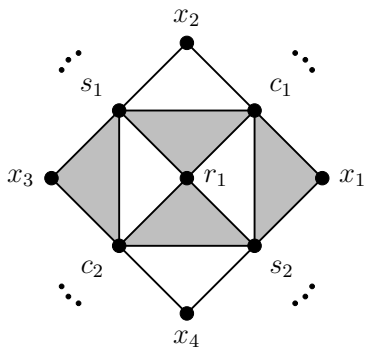

As $W$ embeds in $\bZ_2^k$, $x_2=x_4$ and, as $B$ embeds in $\bZ_2^k$, $x_1=x_3$. Suppose that $x_1=x_2$. Let 
$$W'=\{(r_1,c_1,s_2),(r_1,c_2,s_1),(x_1,c_1,s_1),(x_1,c_2,s_2)\}$$
and
$$B'=\{(r_1,c_1,s_1),(r_1,c_2,s_2),(x_1,c_1,s_2),(x_1,c_2,s_1)\}.$$
Then $(W',B')$ is a spherical latin bitrade such that $W'\subseteq W$ and $B'\subseteq B$. As $(W,B)$ is connected, it must be the case that $W'=W$ and $B'=B$. However, the canonical group of $(W',B')$ is $\bZ_2$, a contradiction. So $x_1\neq x_2$; in which case $G$ contains a subgraph $H=(V,E)$ where $V=\{r_1,x_1,x_2,c_1,s_1,s_2\}$ and $E=\{r_1c_1,r_1s_1,r_1s_2, x_1c_1,x_1s_1,x_1s_2, x_2c_1,x_2s_1,x_2s_2\}$. However, $H$ is isomorphic to $K_{3,3}$; which contradicts $\cG_{W,B}$ being a spherical embedding.
\end{proof}

\section{Existence results}
\label{sec:exist}

\subsection{Directed Eulerian spherical embeddings}

Let $D$ be a, not necessarily simple, digraph of order $n$ with vertex set $V(D)=\{v_1,v_2,\ldots v_n\}$. The \textit{adjacency matrix} $A=[a_{ij}]$ of $D$ is the $n\times n$ matrix where entry $a_{ij}$ is the number of arcs from vertex $v_i$ to vertex $v_j$. The \textit{asymmetric Laplacian} of $D$ is the $n\times n$ matrix $L(D)=B-A$ where $B$ is the diagonal matrix in which entry $b_{ii}$ is the out-degree of vertex $v_{i}$.
The digraph $D$ is said to be \textit{Eulerian} if, for each $v\in V(D)$, the out-degree at $v$ equals the in-degree at $v$. Hence, in an Eulerian digraph we will simply refer to the degree of a vertex $v$, i.e. $\deg v$.

Let $D$ be a connected Eulerian digraph of order $n$ with vertex set $V(D)=\{v_1,v_2,\ldots v_n\}$. Fix an $i$, where $1\leq i\leq n$ and define $L'(D,i)$ to be the matrix obtained by removing row and column $i$ from $L(D)$. As $D$ is connected and Eulerian, the group $\bZ^{n-1}/L'(D,i)\bZ^{n-1}$ is invariant of the choice of $i$, see \cite[Lemma 4.12]{HolLevMesPerProWil}. Hence, the \textit{abelian sandpile group} of the connected Eulerian digraph $D$ can be defined to be the group $\cS(D)=\mathbb{Z}^{n-1}/\mathbb{Z}^{n-1}L'(D,n)$; moreover $\cS(D)\cong\mathbb{Z}^{n-1}/\mathbb{Z}^{n-1}L'(D,i)$, for any $1\leq i\leq n$.

Consider an embedding $\cD$ of a connected Eulerian digraph $D$ in an orientable surface $S$. If each face of the embedding corresponds to a directed cycle in $D$, equivalently the rotation at each vertex alternates between incoming and outgoing arcs, then the embedding is said to be a \textit{directed Eulerian embedding}, see \cite{BonConMorMcK,BonHarSir}. If the embedding is in the sphere we call it a \textit{directed Eulerian spherical embedding}. 

Suppose that $\cG$ is a face two-coloured triangulation of the sphere. By \cite{Hea}, the underlying digraph of $\cG$ has a vertex three-colouring with colour classes $R$, $C$ and $S$. Tutte \cite{Tutte} described a construction, from $\cG$, of directed Eulerian spherical embeddings $D_I(\cG)=D_I$ with vertex set $I$, where $I\in\{R,C,S\}$. We give a description of the construction from $\cite{TMcC}$.

Let $\{I,I_1,I_2\}=\{R,C,S\}$. Consider a vertex $v_i\in I$. Then $v_i$ has even degree, say $d$, the rotation at $i$ is 
$(u_1,v_1,u_2,v_2,\ldots, u_{d/2},v_{d/2}),$
where, without loss of generality, $u_j\in I_1$ and $v_j\in I_2$ for all $1\leq j\leq d/2$ and the edge $e_j$ between $u_j$ and $v_j$ in the rotation is contained in a black face. 
Then in $D_I$ there are $d/2$ outgoing arcs from vertex $v_i$, say $a_j$, $1\leq j\leq d/2$, one for each black face, and the terminal vertex for arc $a_j$ is the vertex in $I$ contained in the white face containing edge $e_j$. Clearly, $D_I$ inherits a spherical embedding from $\cG$ in which the arc rotation at each vertex alternates between incoming and outgoing arcs, so $D_I$ has a directed Eulerian spherical embedding. As the sphere is connected the graph underlying $D_I$ is connected. 
Note that given any of $D_R$, $D_C$ or $D_S$ the original face two-coloured triangulation can be obtained by reversing the above construction:

\begin{lemma}[Tutte, \cite{Tutte}]
\label{lem:gobackwards}
Given a directed Eulerian spherical embedding $D$, there exists a face $2$-coloured spherical triangulation $\cG$ with a vertex $3$-colouring given by the vertex sets $R$, $C$ and $S$, such that for some $I\in\{R,C,S\}$,
$D_I(\cG)\cong D.$
\end{lemma}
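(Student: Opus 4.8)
The plan is to reverse the construction of $D_I(\cG)$ described just before the lemma statement, and verify that the object it produces is a genuine face $2$-coloured spherical triangulation. Given a directed Eulerian spherical embedding $D$ on the sphere, I would first recall that at each vertex the rotation alternates incoming/outgoing arcs and every face is a directed cycle. The key observation is that the faces of $D$ split naturally: since $D$ is embedded in the sphere, its faces form a planar graph structure, and because every face is a directed cycle we may try to $2$-colour the faces. Concretely, I would argue the dual graph of the embedding $D$ is bipartite — equivalently, every vertex of $D$ has even degree (which holds, since $D$ is Eulerian and the rotation alternates, so $\deg v$ is even) and this local parity at every vertex of a spherical embedding forces the face set to be properly $2$-colourable. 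Colour the faces black and white so that each arc of $D$ borders one black and one white face (this is exactly the alternation condition).

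**Next I would construct $\cG$ explicitly.** Take the vertex set of $\cG$ to be $I \sqcup I_1 \sqcup I_2$ where $I = V(D)$, and $I_1$, $I_2$ are new vertex sets in bijection with the black faces and the white faces of $D$ respectively (or the reverse — one of the two choices will match the construction). For each arc $a$ of $D$ from $v$ to $v'$, $a$ lies between a unique black face $f_{\mathrm{b}}$ and a unique white face $f_{\mathrm{w}}$; place a triangle on vertices $\{v', w_{\mathrm{b}}, w_{\mathrm{w}}\}$ where $w_{\mathrm{b}} \in I_1$, $w_{\mathrm{w}} \in I_2$ correspond to $f_{\mathrm{b}}, f_{\mathrm{w}}$ (with the endpoint $v$ versus $v'$ chosen to match Tutte's rule: the terminal vertex of the arc goes into the white face). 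These triangles are the faces of $\cG$; I would then check that gluing them along shared edges, following the rotations inherited from $D$'s embedding, yields a well-defined $2$-cell embedding on the sphere. The face $2$-colouring of $\cG$ is inherited from the black/white colouring of the faces of $D$ (a triangle $\{v', w_{\mathrm b}, w_{\mathrm w}\}$ built from arc $a$ gets one colour when read "toward $w_{\mathrm b}$" and the other "toward $w_{\mathrm w}$" — more carefully, each triangle of $\cG$ sits inside the star of one face of $D$ and gets that face's colour, and the two colour classes alternate correctly because each edge $v'w_{\mathrm b}$ or $v'w_{\mathrm w}$ is shared by two triangles of opposite types). The $3$-colouring by $R := I$, $C := I_1$, $S := I_2$ is immediate from the construction, and $\cG$ is a triangulation because every face has exactly three vertices by fiat. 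That $\cG$ has the sphere as its underlying surface follows from $D$ being spherically embedded together with an Euler-characteristic count: if $D$ has $n$ vertices, $e$ arcs, and $f$ faces with $n - e + f = 2$, then $\cG$ has $n + f$ vertices, $3e$ edges (each arc yields one triangle hence... actually each arc yields one triangle with three edges, but edges are shared), and one verifies $V(\cG) - E(\cG) + F(\cG) = 2$ directly.

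**Finally**, I would verify $D_I(\cG) \cong D$ by unwinding the definition of $D_I$: the vertices of $D_I(\cG)$ are the elements of $I = V(D)$; for a vertex $v_i$ the outgoing arcs of $D_I$ are indexed by the black faces of $\cG$ at $v_i$, which by our construction correspond to the outgoing arcs of $v_i$ in $D$; and the terminal vertex assigned by Tutte's rule (the $I$-vertex in the white face across the relevant edge) is precisely the head of that arc in $D$. Matching the rotations shows the embeddings agree as well, so the isomorphism is as directed embeddings, which is more than the statement requires.

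**The main obstacle** I anticipate is the bookkeeping in the face $2$-colouring and the rotation-matching: one must be careful that the alternation of incoming/outgoing arcs at each vertex of $D$ translates correctly into the alternation of black/white faces around each vertex of $\cG$, and that the choice of which face class becomes $I_1$ versus $I_2$ (and which endpoint of each arc seeds which triangle) is made consistently so that $D_I(\cG)$ comes out equal to $D$ rather than to its reverse or to $D_{I_1}$ of something. Since this lemma is attributed to Tutte and stated without proof in the excerpt, I would expect the authors either to cite \cite{Tutte} directly or to give only a brief sketch of exactly this reversal, so a full verification of every gluing is likely more than what the paper provides.
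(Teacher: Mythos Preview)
Your prediction is correct: the paper does not prove this lemma at all. It is stated with attribution to Tutte \cite{Tutte}, preceded only by the remark that ``given any of $D_R$, $D_C$ or $D_S$ the original face two-coloured triangulation can be obtained by reversing the above construction.'' So your plan to carry out that reversal is exactly in the spirit of what the paper gestures at.

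That said, your explicit construction has a genuine gap. You create \emph{one} triangle $\{v', w_{\mathrm b}, w_{\mathrm w}\}$ per arc $a\colon v\to v'$ of $D$. But if $D$ has $n$ vertices, $e$ arcs and $f$ faces (so $n-e+f=2$), your $\cG$ has $n+f$ vertices and only $e$ triangular faces; for a spherical triangulation one needs $V = 2 + F/2$, which here forces $e=0$. Concretely, when $D$ is the $4$-arc digraph on two vertices coming from the octahedron, your recipe produces four triangles that share vertices but no edges --- not a closed surface at all. This is also why your paragraph on the face $2$-colouring of $\cG$ becomes muddled: a single triangle $\{v',w_{\mathrm b},w_{\mathrm w}\}$ touches both a black and a white face of $D$, so there is no consistent colour to assign it.

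The fix is to place \emph{two} triangles per arc: $\{v, w_{\mathrm b}, w_{\mathrm w}\}$ at the tail and $\{v', w_{\mathrm b}, w_{\mathrm w}\}$ at the head, sharing the edge $w_{\mathrm b}w_{\mathrm w}$. This yields $2e$ faces, the Euler count works, and the face $2$-colouring of $\cG$ is simply ``tail triangle black, head triangle white'' --- which matches the forward construction, where the $I$-vertex of a black face is the tail of its arc. With this correction your verification that $D_I(\cG)\cong D$ goes through exactly as you sketch in your final paragraph.
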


Tutte's Trinity Theorem \cite{Tutte} states that $|\cS(D_R)|=|\cS(D_C)|=|\cS(D_S)|$. For a spherical latin bitrade $(W,B)$ with corresponding face two-coloured triangulation $\cG$, this result was strengthened implicitly in  \cite{BlackburnTMcC} and explicitly in \cite{TMcC} to $\cS(D_R)\cong\cS(D_C)\cong\cS(D_S)\cong \cA_W\cong\cA_B$.

Given an arbitrary directed Eulerian spherical embeddings applying the above construction in reverse yields a face two-coloured triangulation. However, the underlying graph is not necessarily simple. In order to make use of the above equivalences (between sandpile groups and canonical groups of spherical latin squares) we make use of the following result.

\begin{proposition}[McCourt, \cite{TMcC}]
\label{prop:simple}
Suppose that $\cD$ is a directed Eulerian spherical embedding with underlying digraph $D$. Further suppose that $D$ is connected, has no loops, no cut vertices and its underling graph has no 2-edge-cuts. Then there exists a spherical latin bitrade whose canonical group is isomorphic to $\cS(D)$.
\end{proposition}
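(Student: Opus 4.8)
The plan is to feed the given embedding $\cD$ into Lemma~\ref{lem:gobackwards}, recognise the resulting triangulation as that of a spherical latin bitrade, and read off its canonical group via the strengthened form of Tutte's Trinity Theorem recalled above; all of the real work lies in showing that the hypotheses on $D$ force that triangulation to have a simple underlying graph.

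Concretely, I would begin by applying Lemma~\ref{lem:gobackwards} to obtain a face two-coloured spherical triangulation $\cG$, with a vertex three-colouring by classes $R,C,S$, together with an index $I\in\{R,C,S\}$ for which $D_I(\cG)\cong D$; write $\{I,I_1,I_2\}=\{R,C,S\}$ and let $G$ denote the underlying graph of $\cG$. Being a face two-coloured triangulation, $\cG$ is automatically Eulerian, and it is spherical by construction; hence, \emph{provided $G$ is simple}, Theorem~\ref{thm:CavLis} furnishes a spherical latin bitrade $(W,B)$ whose associated triangulation is $\cG$. For that bitrade $D_I(\cG)$ is one of $D_R(\cG),D_C(\cG),D_S(\cG)$, so the equivalences recalled above identify the canonical group of $(W,B)$ with $\cS(D_I(\cG))\cong\cS(D)$, which is exactly the conclusion. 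Everything therefore reduces to the assertion: \emph{if $D$ is connected, loopless, has no cut vertex, and its underlying graph has no $2$-edge-cut, then $G$ is simple.}

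For this I would run the Tutte construction backwards to obtain a dictionary between the edge multiplicities of $\cG$ and the combinatorics of the embedding $\cD$. The construction identifies $V(D)$ with $I$ and the classes $I_1,I_2$ with the two colour classes of a face two-colouring of $\cD$: the face of $\cD$ named by $v\in I_1\cup I_2$ has, as its facial walk, exactly the cyclic list (with multiplicity) of the vertices of $I$ incident with $v$ in $\cG$, and each arc of $D$ lies on one $I_1$-face and one $I_2$-face. Unwinding this yields, for $r\in I$ and $v\in I_1\cup I_2$, that the number of $\cG$-edges joining $r$ to $v$ equals the number of occurrences of $r$ on the facial walk of the face $v$; and, for $u\in I_1$ and $v\in I_2$, that the number of $\cG$-edges joining $u$ to $v$ equals the number of arcs of $D$ shared by the faces $u$ and $v$ of $\cD$. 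Since $\cG$ is vertex three-coloured it has no loop, and every edge of $\cG$ is of one of these two types, so $G$ is non-simple precisely when one of these two counts is at least $2$ for some pair of vertices.

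Finally I would rule both possibilities out. Since $D$ is Eulerian, its underlying graph has all even degrees, hence is bridgeless; being also connected, loopless and cut-vertex-free it is $2$-connected, so every facial walk of $\cD$ is a cycle and no vertex of $D$ is repeated on a facial walk, which disposes of multiple edges of the first type. For the second type, two distinct faces of $\cD$ sharing two distinct arcs $a_1,a_2$ means that $a_1^{*}$ and $a_2^{*}$ are parallel in the plane dual of the underlying graph of $D$, i.e.\ $\{a_1,a_2\}$ is a bond of size two there; every such bond is a $2$-edge-cut, contradicting the hypothesis. Hence $G$ is simple. I expect the crux of the argument to be precisely the third paragraph: making the reverse Tutte correspondence explicit enough to extract the two multiplicity formulas, and checking that no small or degenerate embedding evades the dictionary (the potential exceptions, a lone vertex carrying a loop and a pair of vertices joined by exactly two arcs, are themselves excluded by the looplessness and the absence of a $2$-edge-cut). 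Once that dictionary is in hand, the remaining steps are just a concatenation of results already established above.
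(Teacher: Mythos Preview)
The paper does not give a proof of this proposition at all: it is quoted as a result from \cite{TMcC} and used as a black box, so there is no ``paper's own proof'' to compare your proposal against. What you have sketched is a plausible reconstruction of the argument behind the cited result, and the overall architecture---reverse Tutte's construction via Lemma~\ref{lem:gobackwards}, argue simplicity of the resulting triangulation from the connectivity hypotheses on $D$, then invoke Theorem~\ref{thm:CavLis} and the sandpile/canonical group equivalences---is the natural one and almost certainly matches the approach taken in \cite{TMcC}.

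On the substance of your sketch: the dictionary you describe (multiplicity of an $I$--$I_j$ edge equals the number of occurrences of the $I$-vertex on the corresponding facial walk; multiplicity of an $I_1$--$I_2$ edge equals the number of arcs shared by the two corresponding faces) is correct, and your deductions from it are sound. The one place to be careful is in the second case: you want two distinct faces sharing two arcs to force a $2$-edge-cut, and while the planar-duality argument you give is right, you should also note the degenerate possibility that the embedding $\cD$ has only two faces to begin with (so the two faces in question are the only ones and every arc is shared), which is again excluded by the absence of $2$-edge-cuts once $D$ has at least three arcs, but deserves a sentence. Apart from such boundary bookkeeping, your outline is complete.
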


Hence, in order to construct a spherical latin bitrade with canonical group $\Gamma$ it suffices to find a directed Eulerian spherical embedding satisfying the connectivity conditions of Proposition \ref{prop:simple} whose abelian sandpile group is isomorphic to $\Gamma$.

\subsection{Arbitrary rank}

In this section we will construct families of canonical groups that have arbitrary rank. We will make repeated use of the following, elementary lemma. 

\begin{lemma} 
\label{cl:prime+comps}
Let $2\leq p,a$ and $0\leq x,y,\ell$. Further let $r=p(x+1)+a-x-1$, $s=p(y+1)+a-y-1$ and $t_{i,j}\in\bZ$, for $1\leq i\leq m$ and $1 \leq j \leq \ell$. Then the matrix
\begin{center}
\resizebox{\linewidth}{!}{$L=\left[\begin{tabular}{cc|ccc|ccc|c|ccc}
$p$ & $-p+1$ & $0$ & $\cdots$ & $0$ & $0$ & $\cdots$ & $0$ & $-1$ & $0$ & $\cdots$ &$0$\\
$-p$ & $r$ &$-p$& $\cdots$ & $-p$ & $0$ & $\cdots$ & $0$ & $x+1-a$ & $0$ & $\cdots$ &$0$\\
\hline
$0$ & $-p+1$ &  \multicolumn{3}{c|}{\multirow{3}{*}{$p\mathbb{I}_x$}}&$0$ & $\cdots$ &$0$&$-1$& $0$ & $\cdots$ &$0$\\
$\vdots$ & $\vdots$ &    \multicolumn{3}{c|}{}&$\vdots$ &$\ddots$ &$\vdots$&$\vdots$& $\vdots$ &$\ddots$ &$\vdots$\\
$0$ & $-p+1$ &    \multicolumn{3}{c|}{}&$0$ &$\cdots$ &$0$&$-1$& $0$ &$\cdots$ &$0$\\
\hline
$0$ & $-1$ &  $0$ &$\cdots$ &$0$&\multicolumn{3}{c|}{\multirow{3}{*}{$p\mathbb{I}_y$}}&$-p+1$&$0$ &$\cdots$ &$0$\\
$\vdots$ & $\vdots$ &    $\vdots$ &$\ddots$ &$\vdots$&\multicolumn{3}{c|}{}&$\vdots$&$\vdots$ &$\ddots$ &$\vdots$\\
$0$ & $-1$ &    $0$ &$\cdots$ &$0$&\multicolumn{3}{c|}{}&$-p+1$&$0$ &$\cdots$ &$0$\\
\hline
$0$ & $y+1-a$ & $0$ & $\cdots$ & $0$ & $-p$ & $\ldots$ & $-p$ & $s$ &$t_{1,1}$ &$\cdots$ &$t_{1,\ell}$ \\
$0$ & $-1$ & $0$ & $\cdots$ & $0$ & $0$ & $\ldots$ & $0$ & $-p+1$ &$t_{2,1}$ &$\cdots$ &$t_{2,\ell}$\\
\hline
$0$ & $0$ & $0$ & $\cdots$ & $0$ & $0$ & $\cdots$ & $0$ & $0$ &$t_{3,1}$ &$\cdots$ &$t_{3,\ell}$\\ 
$\vdots$ & $\vdots$ & $\vdots$ & $\cdots$ & $\vdots$ & $\vdots$ & $\cdots$ & $\vdots$ & $\vdots$ & $\vdots$ & $\cdots$ & $\vdots$\\
$0$ & $0$ & $0$ & $\cdots$ & $0$ & $0$ & $\cdots$ & $0$ & $0$  &$t_{m,1}$ &$\cdots$ &$t_{m,\ell}$
\end{tabular}\right]$}
\end{center}
reduces (under operations invertible over $\bZ$) to
$$\left[\begin{tabular}{cc|ccc|c|ccc}
$1$ & $0$ & $0$ & $\cdots$ & $0$ & $0$ & $0$ & $\cdots$ &$0$ \\
$0$ & $ap$ &$0$& $\cdots$ & $0$ & $0$ & $0$ & $\cdots$ &$0$ \\
\hline
$0$ & $0$ &  \multicolumn{3}{c|}{\multirow{3}{*}{$p\mathbb{I}_{x+y}$}}&$0$& $0$ & $\cdots$ &$0$\\
$\vdots$ & $\vdots$ &    \multicolumn{3}{c|}{}&$\vdots$& $\vdots$ & $\ddots$ &$\vdots$\\
$0$ & $0$ &    \multicolumn{3}{c|}{}&$0$& $0$ & $\cdots$ &$0$\\
\hline
$0$ & $0$ & $0$ & $\cdots$ & $0$ & $p$&$t_{1,1}$ &$\cdots$ &$t_{1,\ell}$ \\
$0$ & $0$ & $0$ & $\cdots$ & $0$ & $-p$&$t_{2,1}$ &$\cdots$ &$t_{2,\ell}$\\
\hline
$0$ & $0$ & $0$ & $\cdots$ & $0$ & $0$&$t_{3,1}$ &$\cdots$ &$t_{3,\ell}$\\
$\vdots$ & $\vdots$ & $\vdots$ & $\ddots$ & $\vdots$ & $\vdots$ & $\vdots$ & $\ddots$ & $\vdots$\\
$0$ & $0$ & $0$ & $\cdots$ & $0$ & $0$&$t_{m,1}$ &$\cdots$ &$t_{m,\ell}$
\end{tabular}\right]$$
\end{lemma}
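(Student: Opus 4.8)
The plan is to perform an explicit sequence of integer row and column operations on $L$, processing the block structure column by column from left to right, and to keep track only of the "shape" of the matrix after each stage rather than every entry.

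First I would clear column $1$: the only nonzero entries are $p$ in position $(1,1)$ and $-p$ in position $(2,1)$, so adding row $1$ to row $2$ kills the $(2,1)$ entry. Now column $1$ has a single nonzero entry $p$ in the pivot position $(1,1)$, but row $1$ still has the entries $-p+1$ in column $2$ and $-1$ in the "middle" column (the one carrying $s$, call it column $\kappa$). The key trick is that $\gcd(p,-p+1)=1$: using column $1$ together with column $2$ I can create a $1$ in position $(1,1)$ (e.g. add column $1$ to column $2$ to get $1$ in $(1,2)$, then swap, or more directly do the Euclidean reduction between the $(1,1)$ and $(1,2)$ entries). Once there is a unit in row $1$/column $1$, I use it to clear the rest of row $1$ (the $-1$ in column $\kappa$) and the rest of column $1$ (already done), which splits off the claimed leading $1$ and simultaneously modifies rows/columns $2,\kappa$ and the blocks beneath them in a controlled way; I would track exactly how the $p\mathbb{I}_x$, the $-p$'s, and the $r,s$ entries transform. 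After row $1$ was added to row $2$, the $(2,2)$ entry became $r-p = p(x+1)+a-x-1-p = px + a - x - 1 = x(p-1) + (a-1)$, and the $x$ copies of $-p+1$ in column $2$ (rows of the $p\mathbb{I}_x$ block) together with the $p$'s on the diagonal of that block let me clear column $2$ down those rows by adding suitable multiples; I expect each such row-clearing to add a $-1$-type entry into column $\kappa$ which then gets absorbed, and the net effect on the $(2,2)$ entry is to reduce $x(p-1)+(a-1)$ down to $a-1$ modulo the $p$'s, and then combining with the surviving $p$ from the first block column turns the $(2,2)$ pivot into $ap$. This is the heart of the computation and I would present it as: "eliminate using the $p\mathbb{I}_x$ block, then the $p\mathbb{I}_y$ block, to reduce the $(2,2)$ entry to $a-1$, and then one further step produces $ap$."

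Next I would handle the two blocks $p\mathbb{I}_x$ and $p\mathbb{I}_y$: after the column-$2$ and column-$\kappa$ cleanups these become two diagonal blocks each a multiple-of-$p$ identity, which merge into $p\mathbb{I}_{x+y}$ in the reduced form, with their rows and columns now decoupled from everything else except for residual contributions that I must check vanish (this is where I would verify, using the specific pattern of $-p$'s, $-p+1$'s, $x+1-a$ and $y+1-a$ entries, that all off-block garbage can be cleared — the entries were evidently engineered so that $p$ times the identity can absorb them). Then column $\kappa$: after the above it should carry only the entries $p$ and $-p$ in two consecutive rows (the $s$-row and the row below it), which is exactly the $\begin{bmatrix}p\\-p\end{bmatrix}$ block shown, sitting directly above the untouched rows $t_{1,\cdot},\dots,t_{m,\cdot}$. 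Crucially, the final $\ell$ columns of $t$-entries are \emph{never} used as pivot columns and the last $m$ rows are only ever used to clear things below them if at all; since every pivot we created lies strictly to the left of the $t$-columns and the operations on rows $1,2,\kappa,$ and the block rows only involve those rows among themselves, the $t$-block passes through unchanged — though I would double-check that clearing column $\kappa$ in the $s$-row does not disturb $t_{1,j}$ (it shouldn't, since we only add multiples of rows that have zero $t$-entries, namely rows from the $p\mathbb{I}$ blocks and rows $1$/$2$).

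The main obstacle I anticipate is bookkeeping: making sure that the entries $x+1-a$ and $y+1-a$ (which are $-(a-1)+x$ and $-(a-1)+y$) interact correctly with the $-p+1$ entries and the $p$-diagonal blocks so that everything off the final shape truly cancels, and in particular confirming the arithmetic $r - p \equiv a-1 \pmod p$ and that after merging the two "$a-1$-type" contributions from the $x$- and $y$-sides with a leftover factor one gets precisely $ap$ and not, say, $a$ or $a p^2$. I would organize this by doing the $\ell=0$, $m$-rows-absent, $x=y=0$ case first to pin down where the $ap$ comes from, then reintroduce the blocks and the $t$-rows, each of which only adds rows/columns that are "passive" with respect to the reduction already performed. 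The invertibility over $\bZ$ of every operation used (adding integer multiples of rows/columns, and swaps) is immediate, so no determinant or unimodularity argument beyond $\gcd(p,p-1)=1$ is needed.
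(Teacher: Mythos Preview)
Your strategy is sound and would complete to a correct proof, but it differs from the paper's in the order of operations and carries a small arithmetic slip. You add row $1$ to row $2$ first; the $(2,2)$ entry then becomes $r+(-p+1)=r-p+1=x(p-1)+a$, not $r-p=x(p-1)+(a-1)$ as you wrote. This off-by-one is harmless for the overall argument, but it is exactly the kind of thing your anticipated ``bookkeeping obstacle'' would force you to catch.

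The paper takes a cleaner route: it \emph{aggregates first}. It adds every row of the $p\mathbb{I}_x$ and $p\mathbb{I}_y$ blocks to the $s$-row (row $x+y+3$), and symmetrically adds every column of those blocks into column $2$ and into the $\kappa$-column. After this aggregation the blocks sit inertly as $p\mathbb{I}_{x+y}$ and the only ``live'' data are concentrated in columns $1$, $2$, $\kappa$ and rows $1$, $2$, $x+y+3$, $x+y+4$. Then a single column addition (column $2$ into column $1$) produces a $1$ in position $(1,1)$, and two more column operations (column $1$ into column $\kappa$, and $p-1$ copies of column $1$ into column $2$) make the column-$\kappa$ entries $\begin{smallmatrix}p\\-p\end{smallmatrix}$ and the $(2,2)$ entry $ap$ appear simultaneously; the remaining cleanup is two steps. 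Because the aggregation is done before any pivoting, one never has to track interactions between the unit pivot and the individual block rows, which is precisely the bookkeeping you are worried about. Your top-left-first approach works but forces you to revisit the block rows after the unit is in place, and to verify at each stage that the rows carrying $t_{1,\ast}$ and $t_{2,\ast}$ are only ever modified by $t$-free rows; in the paper's ordering this is automatic.
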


\begin{proof}
For $1\leq i\leq x$ and $1\leq j\leq y$ add Row $2+i$ and Row $2+x+j$ to Row $x+y+3$ of $L$. Subsequently, for $1\leq i\leq x$, add Column $2+i$ to Column $2$ and, for $1\leq j\leq y$, Column $2+x+j$ to Column $3+x+y$. Next add Column $2$ to Column $1$.

Now add Column $1$ to Column $3+x+y$ and $p-1$ copies of Column $1$ to Column $2$. Row $1$ can now be used to clear all non-zeros from Column $1$. Once this is completed it is easy to see the that the remaining non-zeros in Column $2$ can also be cleared.
\end{proof}

The proof of Lemma \ref{lem:composites} is essentially a special case of the proof of Theorem \ref{thm:primes_and_composites}, however, to aid the reader, we detail this simpler case before proving the general result.

\begin{lemma}
\label{lem:composites}
Let $1\leq k$ and let $2\leq m,a_1,a_2, \ldots, a_k$. Then there exists a spherical latin bitrade whose canonical group is isomorphic to $\bigoplus_{i=1}^k \bZ_{m a_i}.$
\end{lemma}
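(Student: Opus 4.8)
The strategy is to convert the statement, via Proposition \ref{prop:simple}, into a purely graph-theoretic construction: it suffices to produce a connected directed Eulerian spherical embedding $\cD$ whose underlying digraph $D$ has no loops and no cut vertices, whose underlying undirected graph has no $2$-edge-cut, and for which $\cS(D)\cong\bigoplus_{i=1}^{k}\bZ_{m a_i}$. Everything then comes down to exhibiting such a $D$ and reading off its sandpile group with the help of Lemma \ref{cl:prime+comps}.

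I would build $D$ from $k$ planar ``gadgets'' $D^{(1)},\dots,D^{(k)}$, one for each index, linked cyclically (with a small fixed amount of extra structure to close up the cycle) so that the resulting amalgam is $2$-connected. The gadget $D^{(i)}$ depends only on $m$ and $a_i$ and is designed to realise exactly the arc-multiplicities encoded by the matrix $L$ of Lemma \ref{cl:prime+comps}, taken with $p=m$, $a=a_i$ and $x=y=0$ (so that no spurious $\bZ_m$ summands appear), the columns carrying the entries $t_{i,j}$ recording only how $D^{(i)}$ attaches to its neighbours. Concretely $D^{(i)}$ carries the bulk of its degree on two ``private'' vertices, joined to each other and to the shared vertices by controlled bundles of parallel arcs; the planar rotation at every vertex is chosen to alternate between incoming and outgoing arcs, so that each bigon face is a directed $2$-cycle and the remaining faces close to directed triangles. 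Gluing the $D^{(i)}$ cyclically in the plane and picking one vertex of the linking structure as the sink yields the desired directed Eulerian spherical embedding.

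The verification then proceeds in three steps. \emph{Embedding:} equal in- and out-degree at each vertex follows from the row and column sums of $L$; alternation of the rotations is built into the layout; planarity is by construction; hence $\cD$ is a directed Eulerian spherical embedding. \emph{Connectivity:} $D$ is loopless by construction, and a short case analysis --- each gadget being internally $2$-edge-connected and meeting the rest of $D$ in at least two vertices and along at least three edges --- rules out cut vertices and $2$-edge-cuts, so Proposition \ref{prop:simple} applies. \emph{Sandpile group:} deleting the chosen sink, $L'(D,\mathrm{sink})$ has a block form --- one diagonal block per gadget, together with border rows and columns recording the linking --- in which the $i$-th diagonal block is, up to operations invertible over $\bZ$, the matrix $L$ of Lemma \ref{cl:prime+comps} with $p=m$, $a=a_i$, $x=y=0$. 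Applying that lemma block by block reduces the $i$-th block to $\mathrm{diag}(1,m a_i)$ together with two rows of the form $(\,\pm m\mid t_{i,\bullet}\,)$, whose sum is a single relation among the border columns. Collecting these $k$ relations with the rows and columns of the linking structure, the complementary part of the matrix is unimodular, so $\cS(D)\cong\bigoplus_{i=1}^{k}\bZ_{m a_i}$.

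The Eulerian bookkeeping and the use of Lemma \ref{cl:prime+comps} are routine; the real work --- and the step I expect to be the main obstacle --- is the simultaneous design of the gadget and the linking pattern. The planar rotation system of $D^{(i)}$ has to alternate in/out while realising the prescribed arc multiplicities, so that the bundle of roughly $2m$ parallel arcs between the two private vertices is threaded by the lone ``escape'' arc in such a way that every bigon face is a directed $2$-cycle; and \emph{at the same time} the linking of the gadgets must be robust enough to destroy every cut vertex and every $2$-edge-cut, while still leaving the coupling entries $t_{i,j}$ tame enough to vanish in the final Smith-form reduction. Once a gadget satisfying all of these constraints is pinned down, the case of general $k$ is just repetition and the sandpile computation is bookkeeping via Lemma \ref{cl:prime+comps}.
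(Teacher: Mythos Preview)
Your strategy is essentially the paper's: reduce to Proposition~\ref{prop:simple}, build a planar Eulerian digraph out of $k$ two-vertex gadgets (one per index~$i$, with arc multiplicities tuned to $m$ and $a_i$), and then compute the sandpile group by repeated application of Lemma~\ref{cl:prime+comps} with $p=m$ and $x=y=0$. The paper does exactly this, and the concrete gadget you are groping for turns out to be very simple: the $i$-th block consists of two ``private'' vertices $\alpha_i,\gamma_i$ joined by $m-1$ arcs in each direction, with $\gamma_i$ further joined to the previous block's $\alpha_{i-1}$ by $a_i-1$ arcs in each direction; single arcs $\alpha_i\to\alpha_{i-1}$ and $\gamma_{i-1}\to\gamma_i$ provide the linking, and two extra arcs at the ends close the boundary face.

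One point of difference worth noting: you propose linking the gadgets \emph{cyclically}, which forces you to deal with a residual ``complementary'' block of border relations and argue it is unimodular. The paper instead links them in a \emph{chain} (with the single sink vertex $\alpha_0$ sitting at one end), so that deleting the sink gives a reduced Laplacian which is genuinely block-lower-triangular along the chain. Lemma~\ref{cl:prime+comps} then peels off one gadget at a time and leaves \emph{exactly} the reduced Laplacian of the digraph on $k-1$ gadgets --- so the computation is a clean induction on~$k$ with no leftover coupling matrix to analyse. This also makes the connectivity check immediate from the picture, rather than a case analysis. Your approach would work, but the chain organisation buys a noticeably shorter argument.
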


\begin{proof}
We begin by defining a digraph $D_{m;a_1,a_2,\ldots,a_k}$ with  vertex set $\{\alpha_0, \alpha_1, \alpha_2,\ldots, \alpha_{k},\break\gamma_1,\gamma_2,\ldots, \gamma_k\}$ and 
\begin{itemize}[noitemsep]
\item for each $1\leq i\leq k$:
\begin{itemize}[noitemsep]
\item[$\circ$] $m-1$ arcs from $\alpha_i$ to $\gamma_i$ and $m-1$ arcs from $\gamma_i$ to $\alpha_i$;
\item[$\circ$] $a_i-1$ arcs from $\alpha_{i-1}$ to $\gamma_i$ and $a_i-1$ arcs from $\gamma_i$ to $\alpha_{i-1}$;
\item[$\circ$] an arc from $\alpha_{i}$ to $\alpha_{i-1}$; 
\end{itemize}
\item for each $1\leq i\leq k -1$: an arc from $\gamma_{i}$ to $\gamma_{i+1}$;  and
\item an additional arc from $\alpha_{0}$ to $\gamma_1$ and an additional arc from $\gamma_k$ to $\alpha_k$.
\end{itemize}
The digraph $D_{m;a_1,a_2,\ldots,a_k}$ has a directed Eulerian spherical embedding and satisfies the connectivity conditions of Proposition \ref{prop:simple}, as can be seen from Figure \ref{fig:Dma_embedding} (in this figure $t$ arcs from $u$ to $v$ alternating with $t$ arcs from $v$ to $u$ are represented by a bidirectional edge labelled $t$). Hence, there exists a spherical latin bitrade whose canonical group is isomorphic to $\cS(D_{m;a_1,a_2,\ldots,a_k})$.

\begin{figure}[!h]
\begin{center}
{\begin{tikzpicture}[fill=gray!50, scale=1.1, vertex/.style={circle,inner sep=2,fill=black,draw}, dot/.style={circle,inner sep=0.7,fill=black,draw}]

\coordinate (a0) at (0.5,1);
\coordinate (a1) at (2,0);
\coordinate (a2) at (4,0);
\coordinate (a3) at (6,0);
\coordinate (ak2) at (8.5,0);
\coordinate (ak1) at (10.5,0);
\coordinate (ak) at (12.5,0);

\coordinate (c1) at (2,2);
\coordinate (c2) at (4,2);
\coordinate (c3) at (6,2);
\coordinate (ck2) at (8.5,2);
\coordinate (ck1) at (10.5,2);
\coordinate (ck) at (12.5,2);


\draw[thick,->-=.55, -<-=.45] (a0) -- (c1);
\draw[thick,->-=.55, -<-=.45] (a1) -- (c2);
\draw[thick,->-=.55, -<-=.45] (a2) -- (c3);
\draw[thick,->-=.55, -<-=.45] (ak2) -- (ck1);
\draw[thick,->-=.55, -<-=.45] (ak1) -- (ck);

\draw[thick,->-=.55, -<-=.45] (a1) -- (c1);
\draw[thick,->-=.55, -<-=.45] (a2) -- (c2);
\draw[thick,->-=.55, -<-=.45] (a3) -- (c3);
\draw[thick,->-=.55, -<-=.45] (ak2) -- (ck2);
\draw[thick,->-=.55, -<-=.45] (ak1) -- (ck1);
\draw[thick,->-=.55, -<-=.45] (ak) -- (ck);

\draw[thick,->-=.55, -<-=.45] (a0) -- (c1);

\draw[thick,->-=.5] (ak) -- (ak1);
\draw[thick,->-=.5] (ak1) -- (ak2);
\draw[thick,->-=.5] (a3) -- (a2);
\draw[thick,->-=.5] (a2) -- (a1);
\draw[thick,->-=.5] (a1) -- (a0);

\draw[thick,->-=.5] (c1) -- (c2);
\draw[thick,->-=.5] (c2) -- (c3);
\draw[thick,->-=.5] (ck2) -- (ck1);
\draw[thick,->-=.5] (ck1) -- (ck);

\draw[thick] (a3) -- (6.5,0);
\draw[thick] (c3) -- (6.5,2);

\draw[thick] (ak2) -- (8,0);
\draw[thick] (ck2) -- (8,2);

\draw [thick, ->-=.5] (a0) to [bend left=60](c1);
\draw [thick, ->-=.5] (ck) to [bend left=90](ak);


\node at (7,0) [dot]{};
\node at (7.25,0) [dot]{};
\node at (7.5,0) [dot]{};

\node at (7,2) [dot]{};
\node at (7.25,2) [dot]{};
\node at (7.5,2) [dot]{};


\node at (a0) [vertex,label=south:$\alpha_0$]{};
\node at (a1) [vertex,label=south:$\alpha_1$]{};
\node at (a2) [vertex,label=south:$\alpha_2$]{};
\node at (a3) [vertex,label=south:$\alpha_3$]{};
\node at (ak2) [vertex,label=south:$\alpha_{k-2}$]{};
\node at (ak1) [vertex,label=south:$\alpha_{k-1}$]{};
\node at (ak) [vertex,label=south:$\alpha_k$]{};

\node at (c1) [vertex,label=north:$\gamma_1$]{};
\node at (c2) [vertex,label=north:$\gamma_2$]{};
\node at (c3) [vertex,label=north:$\gamma_3$]{};
\node at (ck2) [vertex,label=north:$\gamma_{k-2}$]{};
\node at (ck1) [vertex,label=north:$\gamma_{k-1}$]{};
\node at (ck) [vertex,label=north:$\gamma_k$]{};


\node at (1.6,0.95){\tiny $m-1$};
\node at (3.6,0.7){\tiny $m-1$};
\node at (5.6,0.7){\tiny $m-1$};
\node at (8.1,0.95){\tiny $m-1$};
\node at (10.1,0.7){\tiny $m-1$};
\node at (12.1,0.7){\tiny $m-1$};

\node at (2.8,1.2){\tiny $a_2-1$};
\node at (4.8,1.2){\tiny $a_3-1$};
\node at (9.2,1.2){\tiny $a_{k-1}-1$};
\node at (11.3,1.2){\tiny $a_k-1$};

\node at (1.6,1.35){\tiny $a_1-1$};

\end{tikzpicture}}
\end{center}

\caption{A directed Eulerian spherical embedding of $D_{m;a_1,a_2,\ldots,a_k}$.}
\label{fig:Dma_embedding}
\end{figure}
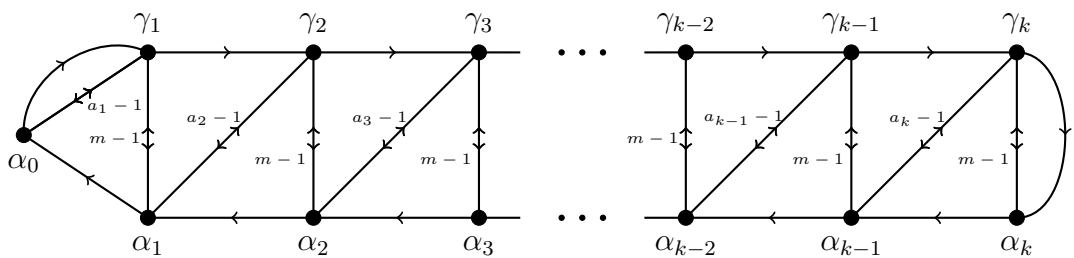

Suppose that we order the vertices of $D_{m;a_1,a_2,\ldots,a_k}$ by $\alpha_k, \gamma_k, \alpha_{k-1}, \gamma_{k-1}, \ldots, \alpha_2, \gamma_2,\break \alpha_1, \gamma_1, \alpha_0$, and construct the associated asymmetric Laplacian. Then, removing the row and column corresponding to $\alpha_0$ yields the reduced asymmetric Laplacian $\cL'(D_{m;a_1,a_2,\ldots,a_k})$.

Let $k\geq 1$ and $m,a_1,a_2,\ldots,a_{k+1}\geq 2$.
Note that 
{\footnotesize $\cL'(D_{m;a_1})=\begin{bmatrix}
m & -m+1 \\
-m & m+a_1-1
\end{bmatrix}$} reduces  to {\footnotesize $\begin{bmatrix}
1 & 0 \\
0 & ma_1
\end{bmatrix}$}; so $\cS(D_{m;a_1})\cong \bZ_{m a_1}$.

Assume that $\cS(D_{m;a_1,a_2,\ldots,a_{k}})$ is isomorphic to $\bigoplus_{i=1}^{k} \bZ_{m a_i}$.  Setting $a_i-1=a_i'$ for $1\leq i\leq k$,
the reduced asymmetric Laplacian $\cL'_k=\cL'(D_{m;a_1,a_2,\ldots,a_{k}})$ is shown below.

\vspace{-5mm}

\begin{center}
\resizebox{\linewidth}{!}{$\cL'_k=\begin{bmatrix}
m & -m+1 & -1 & 0 & 0 & 0 & \cdots & 0 & 0\\
-m & m+a_k' & -a_k' & 0 & 0 & 0 & \cdots & 0 & 0\\
0 & -a_k' & m+a_k' & -m+1 & -1 & 0 & \cdots & 0 & 0\\
0 & -1 & -m+1 & m+a_{k-1}' & -a_{k-1}' & 0 & \cdots & 0 & 0\\ 
0 & 0 & 0 & -a_{k-1}' & m+a_{k-1}' & -m+1 & \cdots & 0 & 0\\ 
0 & 0 & 0 & -1 & -m+1 & m+a_{k-2}' & \cdots & 0 & 0\\ 
\vdots & \vdots & \vdots & \vdots & \vdots & \vdots & \ddots & \vdots & \vdots\\
0 & 0 & 0 & 0 & 0 & 0 & \cdots & m+a_2' & -m+1\\
0& 0 & 0 & 0 & 0 & 0 & \cdots & -m+1 & m+a_1'
\end{bmatrix}$}
\end{center}

Now, consider the digraph $D_{m;a_1,a_2,\ldots,a_{k+1}}$. Applying Lemma \ref{cl:prime+comps}, with $p=m$ and $x=y=0$, to rows $\alpha_{k+1}, \gamma_{k+1},\alpha_{k}, \gamma_{k}$ we have that $\cL'_{k+1}=\cL'(D_{m;a_1,a_2,\ldots,a_{k+1}})$ reduces to
$$\left[\begin{tabular}{cc|ccc}
$1$ & $0$ & $0$ & $\cdots$ & $0$\\
$0$ & $m a_{k+1}$ & $0$ & $\cdots$ & $0$\\
\hline
\vspace{-2mm}
$0$ & $0$ & \\
$\vdots$ & $\vdots$ & &$\cL'_k$&\\
 $0$ & $0$ & && 
\end{tabular}\right].$$
It follows that $\cS(D_{m;a_1,a_2,\ldots,a_{k+1}})$ is isomorphic to $\bigoplus_{i=1}^{k+1} \bZ_{m a_i}$.
\end{proof}

It is now easy to establish the existence of spherical latin bitrades whose canonical groups can be expressed as the direct sum of components of composite order.

\begin{theorem}
\label{thm:composites}
Suppose that $\Gamma$ is a group isomorphic to a direct sum of cyclic groups of composite order; i.e. $\Gamma$ is isomorphic to $\oplus^k_{i=1} \bZ_{n_i}$, where each $n_i$ is composite. Then there exists a spherical latin bitrade whose canonical group is isomorphic to $\Gamma$.
\end{theorem}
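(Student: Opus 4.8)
The plan is to reduce the statement to Lemma~\ref{lem:composites}. Concretely, I would show that any $\Gamma$ satisfying the hypothesis admits a decomposition $\Gamma\cong\bigoplus_{j=1}^{r}\bZ_{n'_j}$ in which every $n'_j$ is composite \emph{and} divisible by one fixed common prime $\pi$. Once this is available, write $n'_j=\pi a_j$ with $a_j=n'_j/\pi$; since $n'_j$ is composite and divisible by $\pi$ we have $a_j\ge 2$, so Lemma~\ref{lem:composites} applied with $m=\pi$ and $k=r$ yields a spherical latin bitrade whose canonical group is $\bigoplus_{j=1}^{r}\bZ_{\pi a_j}\cong\Gamma$. (We may assume $\Gamma$ is nontrivial, so $k\ge 1$ and each $n_i\ge 4$.)

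To construct that decomposition I would pass to the primary (elementary divisor) decomposition of $\Gamma$ and pick a prime $\pi$ whose number $r$ of primary cyclic summands is maximal over all primes. Form $r$ groups of summands, assigning to each group exactly one of the $\pi$-primary summands; a group that received a summand $\bZ_{\pi^{e}}$ with $e\ge 2$ already has composite order, while a group that received only $\bZ_{\pi}$ must absorb at least one further, non-$\pi$-primary, summand to become composite. I then distribute the non-$\pi$-primary summands among the $r$ groups under the sole constraint that no group gets two summands belonging to the same prime; because $r$ is the maximal multiplicity, for each prime $q\neq\pi$ its $r_q\le r$ summands can be spread over $r_q$ distinct groups, and I would do this greedily, filling first those groups that are still a lone $\bZ_\pi$. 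By the Chinese Remainder Theorem each group then corresponds to a cyclic group $\bZ_{n'_j}$, and by construction every $n'_j$ is composite and divisible by $\pi$.

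The step that genuinely uses the hypothesis — and the one to check carefully — is that the greedy distribution really does reach every group that would otherwise remain a lone $\bZ_\pi$; equivalently, that the number of $\pi$-primary summands of $\Gamma$ equal to $\bZ_{\pi^{1}}$ is at most the total number of non-$\pi$-primary summands of $\Gamma$. This follows from the presentation $\Gamma=\bigoplus_{i=1}^{k}\bZ_{n_i}$ with each $n_i$ composite: each index $i$ with $v_\pi(n_i)=1$ contributes precisely one $\bZ_{\pi}$ summand and, since $n_i$ is composite with $v_\pi(n_i)=1$, also at least one summand for a prime different from $\pi$; the contributions of distinct $i$ are disjoint, so the non-$\pi$-primary summands are at least as numerous as the $\bZ_{\pi^{1}}$ summands. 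The degenerate situations are covered by the same reasoning: if $\Gamma$ is a $\pi$-group then every $n_i$ is a proper prime power so there are no $\bZ_{\pi^{1}}$ summands, and if $\Gamma$ is cyclic then $r=1$ and the single group is all of $\Gamma$. Granting this inequality, the distribution succeeds and the reduction to Lemma~\ref{lem:composites} goes through, which proves the theorem.
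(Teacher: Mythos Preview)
Your proposal is correct and follows the same overall strategy as the paper: reduce to Lemma~\ref{lem:composites} by exhibiting a decomposition $\Gamma\cong\bigoplus_j\bZ_{n'_j}$ with every $n'_j$ composite and divisible by a common prime, then apply the lemma with $m$ equal to that prime.

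The difference lies entirely in how the reduction is carried out. The paper simply observes that coprime pairs $\bZ_{n_u}\oplus\bZ_{n_v}$ may be merged into $\bZ_{n_un_v}$ and then asserts that one may assume $\gcd\{n_1,\ldots,n_k\}\neq 1$; strictly speaking this inference is incomplete, since a family such as $(6,10,15)$ has no coprime pair yet has overall $\gcd$ equal to $1$. Your argument via the primary decomposition --- choosing $\pi$ of maximal multiplicity, partitioning the elementary divisors into $r$ blocks each containing exactly one $\pi$-summand, and then verifying (using the compositeness hypothesis) that every block receiving only $\bZ_\pi$ can absorb a non-$\pi$ summand --- is a genuine and complete justification of the step the paper elides. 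The key counting inequality you isolate (the number of $\bZ_{\pi^1}$ summands is at most the number of non-$\pi$-primary summands) together with the greedy distribution is exactly what is needed, and your proof of it from the hypothesis is correct. So your route is the same in spirit but more rigorous in execution; it in fact repairs a small gap in the paper's argument.
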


\begin{proof}
Let $n_1, n_2,\ldots, n_k$ be composite integers and consider $\Gamma\cong\oplus^k_{i=1} \mathbb{Z}_{n_i}$. 
Recall that if $\gcd(n_u,n_v)=1$, $u\neq v$, then $\oplus^k_{i=1} \bZ_{n_i}\cong \bZ_{n_1}\oplus \cdots \oplus \bZ_{n_{u-1}}\oplus\bZ_{n_{u+1}}\oplus \cdots \oplus \bZ_{n_{v-1}}\oplus\bZ_{n_{v+1}}\oplus \cdots \oplus \bZ_{n_k}\oplus \bZ_{n_un_v}$. Thus we  may assume that $\gcd\{n_1,n_2,\ldots, n_k\}\neq 1$. 
Hence there exists a prime, $p$ say, such that $p$ divides $\gcd\{n_1,n_2,\ldots, n_k\}$. Note that, as $n_i$ is composite for all $1\leq i\leq k$, $p\neq n_i$. By setting $m=p$ and applying Lemma \ref{lem:composites} the result follows.
\end{proof}

The next result addresses the existence of spherical latin bitrades for which the Smith Normal Form of their canonical groups contains components of prime order.

\begin{theorem}
\label{thm:primes_and_composites}
Let $p$ be a prime and let $2\leq a_1,a_2, \ldots, a_k$. Further let $n\leq 1+2\sum_{i=1}^k (a_i-1)$. Then there exists spherical latin bitrade whose canonical group is isomorphic to 
$$\bZ_p^n\oplus\left(\bigoplus_{i=1}^k \bZ_{p a_i}\right).$$
\end{theorem}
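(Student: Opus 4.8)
The plan is to mirror the proof of Lemma~\ref{lem:composites}, but now invoking Lemma~\ref{cl:prime+comps} in its full generality (with $x,y\geq 0$ rather than just $x=y=0$): the $x+y$ copies of $p\mathbb{I}$ that each application of Lemma~\ref{cl:prime+comps} produces are exactly what will supply the $\bZ_p^{n}$ summand. By Proposition~\ref{prop:simple} it suffices to exhibit a directed Eulerian spherical embedding $\cD$ whose underlying digraph $D$ is connected, loopless, has no cut vertices and no $2$-edge-cuts, and satisfies $\cS(D)\cong\bZ_p^{n}\oplus\bigoplus_{i=1}^{k}\bZ_{pa_i}$. The first step is to discharge the hypothesis on $n$: since $n\leq 1+2\sum_{i=1}^{k}(a_i-1)$, one can choose integers $x_1,\dots,x_k,y_1,\dots,y_k\geq 0$ with $x_1\leq a_1$, with $x_i\leq a_i-1$ for $2\leq i\leq k$, with $y_i\leq a_i-1$ for $1\leq i\leq k$, and with $\sum_{i=1}^{k}(x_i+y_i)=n$; the single extra unit of slack available at $x_1$ is what accounts for the ``$1+$'' in the bound.

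Next I would build the digraph. Starting from $D_{p;a_1,\dots,a_k}$ of Lemma~\ref{lem:composites} (taking $m=p$), at the $i$-th ``link'' I would insert $x_i+y_i$ new vertices --- $x_i$ of one type and $y_i$ of a second --- each incident with $2p$ arcs, arranged (a few arcs to one of the link's $\alpha$/$\gamma$ vertices, the remainder forming a parallel bundle to the other) so that the row and column of the new vertex in the asymmetric Laplacian coincide with the corresponding row and column of the $p\mathbb{I}_x$ (respectively $p\mathbb{I}_y$) block of the matrix $L$ in Lemma~\ref{cl:prime+comps}. Geometrically the new vertices are placed inside the bigon faces carved out by the bundles of $a_i-1$ parallel arcs at the $i$-th link (with the spare arc $\alpha_0\to\gamma_1$ of $D_{p;a_1,\dots,a_k}$ furnishing one further such face, into which the surplus vertex permitted by the ``$1+$'' term is put), and the arcs at each new vertex are threaded so that incoming and outgoing arcs alternate in its rotation; hence $\cD$ remains a directed Eulerian spherical embedding. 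Each new vertex has degree $2p\geq 4$ and shares a parallel bundle with an existing vertex, so it creates no loop, no cut vertex and no $2$-edge-cut, and the connectivity of the rest is inherited from $D_{p;a_1,\dots,a_k}$; thus $D$ satisfies the hypotheses of Proposition~\ref{prop:simple}.

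Finally I would run the induction on $k$. Ordering the vertices as $\alpha_k,\gamma_k,(\text{link-}k\text{ vertices}),\alpha_{k-1},\gamma_{k-1},(\text{link-}(k{-}1)\text{ vertices}),\dots,\alpha_1,\gamma_1,(\text{link-}1\text{ vertices}),\alpha_0$ and deleting the row and column of $\alpha_0$ produces a reduced asymmetric Laplacian $\cL'(D)$ whose ``top-left'' portion --- the rows and columns for $\alpha_k$, $\gamma_k$, the $x_k+y_k$ link-$k$ vertices, together with the one coupling column to $\alpha_{k-1}$ --- is precisely the matrix $L$ of Lemma~\ref{cl:prime+comps} with $p=p$, $a=a_k$, $x=x_k$, $y=y_k$, the integers $t_{i,j}$ recording how $\gamma_k$ and $\alpha_{k-1}$ meet the remainder of $D$. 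Applying Lemma~\ref{cl:prime+comps} peels off a $1$, a $\bZ_{pa_k}$, and $x_k+y_k$ copies of $\bZ_p$, and leaves exactly the reduced asymmetric Laplacian of the analogous augmented digraph on the data $a_1,\dots,a_{k-1}$ and $x_1,\dots,x_{k-1},y_1,\dots,y_{k-1}$; the base case $k=1$ (with its $x_1\leq a_1$, $y_1\leq a_1-1$ extra vertices) is one more application of Lemma~\ref{cl:prime+comps}. Chaining these reductions gives $\cS(D)\cong\bZ_p^{\sum_{i=1}^k(x_i+y_i)}\oplus\bigoplus_{i=1}^{k}\bZ_{pa_i}=\bZ_p^{n}\oplus\bigoplus_{i=1}^{k}\bZ_{pa_i}$, and Proposition~\ref{prop:simple} finishes the proof.

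The step I expect to be the real obstacle is not the Laplacian bookkeeping --- Lemma~\ref{cl:prime+comps} is evidently engineered to absorb exactly it --- but the geometry: producing an explicit directed Eulerian \emph{spherical} embedding of the augmented digraph and verifying that it still has no cut vertex and no $2$-edge-cut once up to $2(a_i-1)$ (and, at the first link, one more) extra vertices have been crammed in near each link. In particular one must check that the new arcs can be threaded through the parallel-arc bundles so that the alternating incoming/outgoing rotation condition is maintained at $\gamma_i$ and at the adjacent $\alpha$-vertex, and pin down precisely where the lone ``surplus'' vertex allowed by the ``$1+$'' term is attached and why no analogous surplus is available at the other end.
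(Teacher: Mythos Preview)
Your proposal is correct and follows essentially the same approach as the paper: augment $D_{p;a_1,\dots,a_k}$ by replacing arcs at the $i$-th link with new degree-$2p$ vertices whose Laplacian rows match the $p\mathbb{I}_x$, $p\mathbb{I}_y$ blocks of Lemma~\ref{cl:prime+comps}, then run the induction on $k$ peeling off one link at a time. The paper makes one specific choice where you leave freedom: rather than allowing an arbitrary distribution $(x_i,y_i)$ with $\sum(x_i+y_i)=n$, it fills the links greedily from $i=1$ upward (setting $d_i=e_i=a_i-1$ for $i\leq k'$, then $d_{k'+1}=\lceil t/2\rceil$, $e_{k'+1}=\lfloor t/2\rfloor$), and the ``$1+$'' surplus is realised by a single additional vertex $\epsilon_{1,0}$ inserted on the distinguished extra arc $\alpha_0\to\gamma_1$ of $D_{p;a_1,\dots,a_k}$ --- exactly the mechanism you anticipated. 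Your concern about the geometric obstacle is well placed: the paper handles it by an explicit picture (Figure~\ref{fig:Dman_construction}) showing how each new vertex is threaded into the rotation, from which the directed-Eulerian property and the connectivity conditions of Proposition~\ref{prop:simple} can be read off directly.
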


\begin{proof}
If $n=0$, then this is Lemma \ref{lem:composites}. So for the remainder of the proof assume that $n\geq 1$. As $n\leq 1+2\sum_{i=1}^k (a_i-1)$ there exists a $k'$, $0\leq k'< k$, and $t$, $0\leq t\leq 2a_{k'+1}-1$ such that 
$$n=1+2\sum_{i=1}^{k'} (a_i-1)+t.$$

First construct the graph $D_{p;a_1,a_2,\ldots,a_k}$ (from the proof of Lemma \ref{lem:composites}). Next, add the following vertices,
\begin{itemize}[noitemsep]
\item for each $1\leq i\leq k'$: add vertices $\delta_{i,j}$ and $\epsilon_{i,j}$ for all $1\leq i\leq a_i-1$;
\item for each $1\leq j\leq \lceil t/2\rceil$: add vertices $\delta_{k'+1,j}$;
\item for each $1\leq j\leq \lfloor t/2\rfloor$: add vertices $\epsilon_{k'+1,j}$; and
\item the vertex $\epsilon_{1,0}$.
\end{itemize}
Now,
\begin{itemize}[noitemsep]
\item replace an arc from $\alpha_{0}$ to $\gamma_1$ with a single arc from  $\epsilon_{1,0}$ to $\gamma_1$ and $p-1$ arcs from $\epsilon_{1,0}$ to $\alpha_{i-1}$ and $p$ arcs from $\alpha_{0}$ to $\epsilon_{1,0}$.
\item for each $1\leq i\leq k'$:
\begin{itemize}[noitemsep]
\item[$\circ$] replace the arcs from $\gamma_i$ to $\alpha_{i-1}$ with single arcs from  $\delta_{i,j}$ to $\alpha_{i-1}$, $p-1$ arcs from $\delta_{i,j}$ to $\gamma_i$ and $p$ arcs from $\gamma_i$ to $\delta_{i,j}$, where $1\leq j\leq a_i-1$.
\item[$\circ$] replace the arcs from $\alpha_{i-1}$ to $\gamma_i$ with single arcs from  $\epsilon_{i,j}$ to $\gamma_i$, $p-1$ arcs from $\epsilon_{i,j}$ to $\alpha_{i-1}$ and $p$ arcs from $\alpha_{i-1}$ to $\epsilon_{i,j}$, where $1\leq j\leq a_i-1$.
\end{itemize}
\item replace $\lceil t/2\rceil$ arcs from $\gamma_{k'+1}$ to $\alpha_{k'}$ with single arcs from  $\delta_{k'+1,j}$ to $\alpha_{k'}$, $p-1$ arcs from $\delta_{k'+1,j}$ to $\gamma_{k'+1}$ and $p$ arcs from $\gamma_{k'+1}$ to $\delta_{k'+1,j}$, where $1\leq j\leq \lceil t/2\rceil$.
\item replace $\lfloor t/2\rfloor$ arcs from $\alpha_{k'}$ to $\gamma_{k'+1}$ with single arcs from  $\epsilon_{k'+1,j}$ to $\gamma_{k'+1}$, $p-1$ arcs from $\epsilon_{k'+1,j}$ to $\alpha_{k'}$ and $p$ arcs from $\alpha_{k'}$ to $\epsilon_{k'+1,j}$, where $1\leq j\leq \lfloor t/2\rfloor$.
\end{itemize}

Call the resulting digraph $D^n_{p;a_1,a_2,\ldots,a_k}$, see Figure \ref{fig:Dman_construction} for an illustration of its construction. 
Note that $D^n_{p;a_1,a_2,\ldots,a_k}$ has a directed spherical embedding, and that it satisfies the connectivity conditions of Proposition \ref{prop:simple}. Therefore, there exists a spherical latin bitrade whose canonical group is isomorphic to $\cS(D^n_{p;a_1,a_2,\ldots,a_k})$.

\begin{figure}
For $i\leq k'$:
\begin{center}
\scalebox{0.85}
{\begin{tikzpicture}[fill=gray!50, scale=1.1, vertex/.style={circle,inner sep=2,fill=black,draw}, dot/.style={circle,inner sep=0.7,fill=black,draw}]

\coordinate (ai1) at (0,1);
\coordinate (ai) at (2,1);
\coordinate (ci1) at (0,3);
\coordinate (ci) at (2,3);

\coordinate (bi1) at (6,0);
\coordinate (bi) at (10,0);
\coordinate (di1) at (6,4);
\coordinate (di) at (10,4);

\coordinate (de1) at (6.75,3.25);
\coordinate (def) at (8.5,1.5);

\coordinate (ep1) at (7.5,2.5);
\coordinate (epf) at (9.25,0.75);


\draw[thick,->-=.55, -<-=.45] (ai1) -- (ci1);
\draw[thick,->-=.55, -<-=.45] (ai) -- (ci);
\draw[thick,->-=.55, -<-=.45] (ai1) -- (ci);

\draw[thick,->-=.5] (ai) -- (ai1);
\draw[thick,->-=.5] (ci1) -- (ci);


\draw[thick,->-=.55, -<-=.45] (bi1) -- (di1);
\draw[thick,->-=.55, -<-=.45] (bi) -- (di);

\draw[thick,->-=.5] (bi) to [bend left=20] (bi1);
\draw[thick,->-=.5] (di1) to [bend left=20] (di);

\draw[thick,->-=.5] (de1) -- (bi1);
\draw[thick,->-=.5] (def) -- (bi1);

\draw[thick,->-=.5] (ep1) -- (di);
\draw[thick,->-=.5] (epf) -- (di);

\draw [thick,->-=.55, -<-=.45] (bi1) to [bend right=10](ep1);
\draw [thick,->-=.5] (bi1) to [bend left=10](ep1);

\draw [thick,->-=.55, -<-=.45] (bi1) to [bend right=10](epf);
\draw [thick,->-=.5] (bi1) to [bend left=10](epf);

\draw [thick,->-=.55, -<-=.45] (di) to [bend right=10](de1);
\draw [thick,->-=.5] (di) to [bend left=10](de1);

\draw [thick,->-=.55, -<-=.45] (di) to [bend right=10](def);
\draw [thick,->-=.5] (di) to [bend left=10](def);

\draw [ultra thick, ->] (3,2) -- (4.5,2);

\node at (7.9,2.1) [dot]{};
\node at (8,2) [dot]{};
\node at (8.1,1.9) [dot]{};


\node at (ai1) [vertex,label=south:$\alpha_{i-1}$]{};
\node at (ai) [vertex,label=south:$\alpha_{i}$]{};

\node at (bi1) [vertex,label=south:$\alpha_{i-1}$]{};
\node at (bi) [vertex,label=south:$\alpha_{i}$]{};

\node at (ci1) [vertex,label=north:$\gamma_{i-1}$]{};
\node at (ci) [vertex,label=north:$\gamma_{i}$]{};

\node at (di1) [vertex,label=north:$\gamma_{i-1}$]{};
\node at (di) [vertex,label=north:$\gamma_{i}$]{};

\node at (de1) [vertex]{};
\node at (6.5,3.5) {\small $\delta_{i,1}$};
\node at (def) [vertex]{};
\node at (8.8,1.15) {\small $\delta_{i,a_i-1}$};

\node at (ep1) [vertex]{};
\node at (7.25,2.75) {\small $\epsilon_{i,1}$};
\node at (epf) [vertex]{};
\node at (9.5,0.5) {\small $\epsilon_{i,a_i-1}$};


\node at (0.67,2.1){\tiny $a_i-1$};
\node at (-0.4,1.95){\tiny $p-1$};
\node at (2.4,1.95){\tiny $p-1$};

\node at (8.2,3.95){\tiny $p-1$};
\node at (8.7,2.75){\tiny $p-1$};

\node at (7.8,0.05){\tiny $p-1$};
\node at (7.3,1.25){\tiny $p-1$};

\node at (5.6,1.95){\tiny $p-1$};
\node at (10.4,1.95){\tiny $p-1$};

\end{tikzpicture}}
\end{center}


Let $a=a_k'-1$, then, if $t=2\ell$:
\begin{center}
\scalebox{0.85}
{\begin{tikzpicture}[fill=gray!50, scale=1.1, vertex/.style={circle,inner sep=2,fill=black,draw}, dot/.style={circle,inner sep=0.7,fill=black,draw}]

\coordinate (ak) at (0,1);
\coordinate (ak1) at (2,1);
\coordinate (ck) at (0,3);
\coordinate (ck1) at (2,3);

\coordinate (bk) at (6,0);
\coordinate (bk1) at (10,0);
\coordinate (dk) at (6,4);
\coordinate (dk1) at (10,4);

\coordinate (de1) at (6.75,3.25);

\coordinate (epl) at (8,2);


\draw[thick,->-=.55, -<-=.45] (ak) -- (ck);
\draw[thick,->-=.55, -<-=.45] (ak1) -- (ck1);
\draw[thick,->-=.55, -<-=.45] (ak) -- (ck1);

\draw[thick,->-=.5] (ak1) -- (ak);
\draw[thick,->-=.5] (ck) -- (ck1);


\draw[thick,->-=.55, -<-=.45] (bk) -- (dk);
\draw[thick,->-=.55, -<-=.45] (bk1) -- (dk1);

\draw[thick,->-=.5] (bk1) to [bend left=20] (bk);
\draw[thick,->-=.5] (dk) to [bend left=20] (dk1);

\draw[thick,->-=.5] (de1) -- (bk);

\draw[thick,->-=.5] (epl) -- (dk1);

\draw [thick,->-=.55, -<-=.45] (bk) to [bend right=10](epl);
\draw [thick,->-=.5] (bk) to [bend left=10](epl);

\draw [thick,->-=.55, -<-=.45] (dk1) to [bend right=10](de1);
\draw [thick,->-=.5] (dk1) to [bend left=10](de1);

\draw [ultra thick, ->] (3,2) -- (4.5,2);

\draw [thick,->-=.55, -<-=.45] (dk1) to [bend left=30](bk);

\node at (7.275,2.725) [dot]{};
\node at (7.375,2.625) [dot]{};
\node at (7.475,2.525) [dot]{};


\node at (ak) [vertex,label=south:$\alpha_{k'}$]{};
\node at (ak1) [vertex,label=south:$\alpha_{k'+1}$]{};

\node at (bk) [vertex,label=south:$\alpha_{k'}$]{};
\node at (bk1) [vertex,label=south:$\alpha_{k'+1}$]{};

\node at (ck) [vertex,label=north:$\gamma_{k'}$]{};
\node at (ck1) [vertex,label=north:$\gamma_{k'+1}$]{};

\node at (dk) [vertex,label=north:$\gamma_{k'}$]{};
\node at (dk1) [vertex,label=north:$\gamma_{k'+1}$]{};

\node at (de1) [vertex]{};
\node at (6.5,3.5) {\small $\delta_{k'+1,1}$};

\node at (epl) [vertex]{};
\node at (7.3,2) {\small $\epsilon_{k'+1,\ell}$};


\node at (0.75,2.1){\tiny $a$};
\node at (-0.4,1.95){\tiny $p-1$};
\node at (2.4,1.95){\tiny $p-1$};

\node at (8.2,3.95){\tiny $p-1$};
\node at (7.4,0.8){\tiny $p-1$};

\node at (5.6,1.95){\tiny $p-1$};
\node at (10.4,1.95){\tiny $p-1$};

\node at (9.125,1.35) {\tiny $a-\ell$};

\end{tikzpicture}}
\end{center}


Again let $a=a'_k-1$, then, if $t=2\ell +1$:
\begin{center}
\scalebox{0.85}
{\begin{tikzpicture}[fill=gray!50, scale=1.1, vertex/.style={circle,inner sep=2,fill=black,draw}, dot/.style={circle,inner sep=0.7,fill=black,draw}]

\coordinate (ak) at (0,1);
\coordinate (ak1) at (2,1);
\coordinate (ck) at (0,3);
\coordinate (ck1) at (2,3);

\coordinate (bk) at (6,0);
\coordinate (bk1) at (10.5,-0.5);
\coordinate (dk) at (6,4);
\coordinate (dk1) at (10,4);

\coordinate (de1) at (6.75,3.25);

\coordinate (epl) at (7.8,2.2);

\coordinate (del1) at (8.6,1.4);


\draw[thick,->-=.55, -<-=.45] (ak) -- (ck);
\draw[thick,->-=.55, -<-=.45] (ak1) -- (ck1);
\draw[thick,->-=.55, -<-=.45] (ak) -- (ck1);

\draw[thick,->-=.5] (ak1) -- (ak);
\draw[thick,->-=.5] (ck) -- (ck1);


\draw[thick,->-=.55, -<-=.45] (bk) -- (dk);
\draw[thick,->-=.55, -<-=.45] (bk1) to [bend right=20] (dk1);

\draw[thick,->-=.5] (bk1) to [bend left=20] (bk);
\draw[thick,->-=.5] (dk) to [bend left=20] (dk1);

\draw[thick,->-=.5] (de1) -- (bk);

\draw[thick,->-=.5] (epl) -- (dk1);

\draw [thick,->-=.55, -<-=.45] (bk) to [bend right=10](epl);
\draw [thick,->-=.5] (bk) to [bend left=10](epl);

\draw [thick,->-=.55, -<-=.45] (dk1) to [bend right=10](de1);
\draw [thick,->-=.5] (dk1) to [bend left=10](de1);

\draw[thick,->-=.5] (del1) -- (bk);
\draw [thick,->-=.55, -<-=.45] (dk1) to [bend right=10](del1);
\draw [thick,->-=.5] (dk1) to [bend left=10](del1);

\draw [ultra thick, ->] (3,2) -- (4.5,2);

\draw [thick,-<-=.5] (dk1) to [out=275, in=45] (9.5,0.5) to [out=225, in=355] (bk);

\draw [thick,->-=.55, -<-=.45] (dk1) to [out=290, in=45] (9.75,0.25) to [out=225, in=340] (bk);

\node at (7.175,2.825) [dot]{};
\node at (7.275,2.725) [dot]{};
\node at (7.375,2.625) [dot]{};


\node at (ak) [vertex,label=south:$\alpha_{k'}$]{};
\node at (ak1) [vertex,label=south:$\alpha_{k'+1}$]{};

\node at (bk) [vertex,label=south:$\alpha_{k'}$]{};
\node at (bk1) [vertex,label=south:$\alpha_{k'+1}$]{};

\node at (ck) [vertex,label=north:$\gamma_{k'}$]{};
\node at (ck1) [vertex,label=north:$\gamma_{k'+1}$]{};

\node at (dk) [vertex,label=north:$\gamma_{k'}$]{};
\node at (dk1) [vertex,label=north:$\gamma_{k'+1}$]{};

\node at (de1) [vertex]{};
\node at (6.5,3.5) {\small $\delta_{k'+1,1}$};

\node at (del1) [vertex]{};
\node at (9.1,1.05) {\small $\delta_{k'+1,\ell+1}$};

\node at (epl) [vertex]{};
\node at (7.2,2.2) {\small $\epsilon_{k'+1,\ell}$};


\node at (0.75,2.1){\tiny $a$};
\node at (-0.4,1.95){\tiny $p-1$};
\node at (2.4,1.95){\tiny $p-1$};

\node at (8.2,3.95){\tiny $p-1$};
\node at (7.5,1.1){\tiny $p-1$};

\node at (8.7,2.5){\tiny $p-1$};

\node at (5.6,1.95){\tiny $p-1$};
\node at (11.1,1.8){\tiny $p-1$};

\node at (10.1,0) {\tiny $a-\ell-1$};

\end{tikzpicture}}
\end{center}

\caption{Constructing $D^n_{m;a_1,a_2,\ldots,a_k}$.}
\label{fig:Dman_construction}
\end{figure}

For ease of notation, let
$$d_i=\left\{\begin{array}{ll}
a_i-1 & \text{for }1\leq i\leq k'\\
\lceil t/2\rceil & \text{for }i=k'+1\\
0&\text{otherwise}
\end{array}\right.
\quad\text{ and }\quad
e_i=\left\{\begin{array}{ll}
a_i-1 & \text{for }1\leq i\leq k'\\
\lfloor t/2\rfloor & \text{for }i=k'+1\\
0&\text{otherwise}
\end{array}\right..$$
Suppose that we order the vertices of $D^n_{p;a_1,a_2,\ldots,a_k}$ by 
$$(\alpha_k,\gamma_k,\delta_{k,d_k},\ldots,\delta_{k,1},\epsilon_{k,e_k}, \ldots, \epsilon_{k,1}), \ldots, 
(\alpha_2,\gamma_2,\delta_{2,d_{2}},\ldots,\delta_{2,1},\epsilon_{2,e_{2}}, \ldots, \epsilon_{2,1}),$$
$$(\alpha_1,\gamma_1,\delta_{2,d_{1}},\ldots,\delta_{1,1},\epsilon_{1,e_{1}}, \ldots, \epsilon_{1,1}, \epsilon_{1,0}),\alpha_0$$
and construct the associated asymmetric Laplacian. Then, removing the row and column corresponding to $\alpha_0$ yields the reduced asymmetric Laplacian $\cL'(D^n_{p;a_1,a_2,\ldots,a_k})$.

Let $k\geq 1$ and $p,a_1,a_2,\ldots,a_{k+1}\geq 2$ and let $1\leq n\leq 1+2\sum_{i=1}^{k+1} (a_i-1)$. Then, letting $x=d_1$, $y=e_1$ and $r=p(x+1)-a_1-x-1$, 
$$\cL'\left(D^{\min\{n,1+2(a_1-1)\}}_{p;a_1}\right)=\left[
\begin{tabular}{cc|ccc|ccc|c}
$p$ & $-p+1$ & $0$ & $\cdots$ & $0$ & $0$ & $\ldots$ & $0$ & $0$ \\
$-p$ & $r$ & $-p$ &  $\ldots$ & $-p$ & $0$ & $\ldots$ & $0$ & $0$ \\
\hline
$0$ & $-p+1$ &  \multicolumn{3}{c|}{\multirow{3}{*}{$p\mathbb{I}_{d_1}$}}&$0$ & $\cdots$ & $0$ & $0$\\
$\vdots$ & $\vdots$ & \multicolumn{3}{c|}{}&$\vdots$ &$\ddots$ &$\vdots$&$\vdots$ \\
$0$ & $-p+1$ & \multicolumn{3}{c|}{}&$0$ & $\cdots$ & $0$ & $0$\\
\hline
$0$ & $-1$ & $0$ & $\cdots$ & $0$ & \multicolumn{3}{c|}{\multirow{3}{*}{$p\mathbb{I}_{e_1}$}} & $0$\\
$\vdots$ & $\vdots$ & $\vdots$ & $\ddots$ & $\vdots$ & \multicolumn{3}{c|}{} 
 & $0$ \\
$0$ & $-1$ & $0$ & $\ldots$ & $0$ & \multicolumn{3}{c|}{} & $\vdots$\\
\hline
$0$ & $-1$ & $0$ & $\ldots$ & $0$ & $0$ & $\cdots$ & $0$ & $p$\\
\end{tabular}
\right].$$
Which reduces, under a similar argument to that used to prove Lemma \ref{cl:prime+comps}, to 
$$\left[
\begin{tabular}{cc|ccc}
$1$ & $0$ & $0$ & $\cdots$ & $0$\\
$0$ & $pa_1$ & $0$ & $\cdots$ & $0$ \\
\hline
$0$ & $0$ &  \multicolumn{3}{c}{\multirow{3}{*}{$p\mathbb{I}_{d_1+e_1+1}$}}\\
$\vdots$ & $\vdots$ & \multicolumn{3}{c}{}\\
$0$ & $0$ & \multicolumn{3}{c}{}
\end{tabular}
\right].$$
Hence, $\cS(D^{\min\{n,1+2(a_1-1)\}}_{p;a_1})\cong\bZ_p^{\min\{n,1+2(a_1-1)\}}\oplus\bZ_{p a_1}$.

Assume that $\cS\left(D^{\min\{n,1+2\sum_{i=1}^k(a_i-1)\}}_{p;a_1,a_2,\ldots,a_k}\right)\cong\bZ_p^{\min\{n,1+2\sum_{i=1}^k(a_i-1)\}}\oplus\left(\bigoplus_{i=1}^k \bZ_{p a_i}\right)$. Denote $\cL'\left(D^{\min\{n,1+2\sum_{i=1}^k(a_i-1)\}}_{p;a_1,a_2,\ldots,a_k}\right)$ by $\cL_k'$ and
consider $\cL'\left(D^n_{p;a_1,a_2,\ldots,a_{k+1}}\right)$. Applying Lemma \ref{cl:prime+comps}, with 
$$x=\left\lceil \frac{1}{2}\max\left\{n-1-2\sum_{i=1}^k(a_i-1),0\right\}\right\rceil$$
and 
 $$y=\left\lfloor \frac{1}{2}\max\left\{n-1-2\sum_{i=1}^k(a_i-1),0\right\}\right\rfloor$$
to rows  $\alpha_{k+1},\gamma_{k+1},\delta_{k+1,x},\ldots,\delta_{k+1,1},\epsilon_{k+1,y}, \ldots, \epsilon_{k+1,1},\alpha_k,\gamma_k$ of $\cL_{k+1}(D^n_{p;a_1,a_2,\ldots,a_{k+1}})$ reduces it to
$$\left[\begin{tabular}{cc|ccc|ccc}
$1$ & $0$ & $0$ & $\cdots$ & $0$ & $0$ & $\cdots$ & $0$\\
$0$ & $p a_{k+1}$ & $0$ & $\cdots$ & $0$ & $0$ & $\cdots$ & $0$\\
\hline
$0$ & $0$ &\multicolumn{3}{c|}{\multirow{3}{*}{{$p\mathbb{I}_{x+y}$}}}& $0$ & $\cdots$ & $0$\\
$\vdots$ & $\vdots$ &  \multicolumn{3}{c|}{} &  $\vdots$ & $\ddots$ & $\vdots$\\
$0$ & $0$ & \multicolumn{3}{c|}{} & $0$ & $\cdots$ & $0$\\
\hline
$0$ & $0$ & $0$ & $\cdots$ & $0$ &  \multicolumn{3}{c}{\multirow{3}{*}{{$\mathcal{L}'_k$}}}\\
$\vdots$ & $\vdots$ & $\vdots$ & $\ddots$ & $\vdots$& \multicolumn{3}{c}{}\\
 $0$ & $0$ & $0$ & $\cdots$ & $0$& \multicolumn{3}{c}{}
\end{tabular}\right].$$
Therefore $\cS(D^n_{p;a_1,a_2,\ldots,a_{k+1}})\cong\bZ_p^n\oplus\left(\bigoplus_{i=1}^{k+1} \bZ_{p a_i}\right)$.
\end{proof}

\subsection{Canonical groups of rank two}

In this section we will restrict our attention to canonical groups of rank two. We show that, with one exception and a further three possible exceptions, any finite abelian group of rank two is isomorphic to the canonical group of some spherical latin bitrade.

We will make use of the following elementary lemma.

\begin{lemma}
\label{lem:121-reduction}
Let $2\leq d$, $1\leq x$, $2\leq y$ and $t_{i,j}\in\mathbb{Z}$ for  $1\leq i\leq x$ and $1\leq j\leq y$. Further let $M$ be the $d-1$ by $d$ matrix where 
$$m_{ij}=\left\{\begin{array}{ll}
2&\text{if } i=j\\
-1&\text{if } j=i+1\text{ or }j=i-1\\
0&\text{otherwise}
\end{array}\right..
$$
Then the $d+x-2$ by $d+y-2$ matrix 
$$\left[\begin{tabular}{ccc|ccccc}
\multicolumn{5}{c|}{\multirow{4}{*}{{\large$M$}}}& $0$ & $\cdots$ & $0$ \\
\multicolumn{5}{c|}{}& $0$ & $\cdots$ & $0$ \\
\multicolumn{5}{c|}{}&$\vdots$ &$\ddots$ &$\vdots$ \\
\multicolumn{5}{c|}{}& $0$ & $\cdots$ & $0$ \\
\hline
$0$ & $\cdots$ & $0$ & $t_{1,1}$ & $t_{1,2}$& $t_{1,3}$ & $\cdots$ & $t_{1,y}$\\
$\vdots$ & $\ddots$ & $\vdots$ & $\vdots$ & $\vdots$& $\vdots$ & $\ddots$ & $\vdots$\\
$0$ & $\cdots$ & $0$ & $t_{x,1}$ & $t_{x,2}$& $t_{x,3}$ & $\cdots$ & $t_{x,y}$
\end{tabular}\right]$$
reduces (under operations invertible over $\mathbb{Z}$) to 
$$\left[\begin{tabular}{ccc|ccccc}
\multicolumn{3}{c|}{\multirow{3}{*}{{\large$\mathbb{I}_{d-2}$}}}& $0$& \multicolumn{1}{c|}{$0$} &  $0$ & $\cdots$ & $0$ \\
\multicolumn{3}{c|}{}&$\vdots$&\multicolumn{1}{c|}{$\vdots$}&$\vdots$ &$\ddots$ &$\vdots$ \\
\multicolumn{3}{c|}{}& $0$ & \multicolumn{1}{c|}{$0$}&  $0$ & $\cdots$ & $0$ \\
\cline{1-5}
$0$ & $\cdots$ & $0$& $d$ &  \multicolumn{1}{c|}{$1-d$} & $0$& $\cdots$ & $0$ \\
\hline 
$0$ & $\cdots$ & $0$ & $t_{1,1}$ & $t_{1,2}$& $t_{1,3}$& $\cdots$ & $t_{1,y}$\\
$\vdots$ & $\ddots$ & $\vdots$ & $\vdots$ &  $\vdots$&  $\vdots$ & $\ddots$ & $\vdots$\\
$0$ & $\cdots$ & $0$ & $t_{x,1}$ & $t_{x,2}$&$t_{x,3}$& $\cdots$ & $t_{x,y}$
\end{tabular}\right].$$
\end{lemma}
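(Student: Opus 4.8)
The plan is to perform explicit row and column operations over $\bZ$ on the block matrix, working first entirely within the $M$-block and then using the resulting pivots to clean up the last $x$ rows. The matrix $M$ is the (reduced) path Laplacian: an $(d-1)\times d$ matrix with $2$ on the diagonal, $-1$ on the super- and sub-diagonal. The key observation is that $M$ (with its extra column) reduces, by classic operations, to a form where the first $d-2$ rows become $\mathbb{I}_{d-2}$ (with zeros to the right) and the last row of the $M$-block becomes $(0,\ldots,0,d,1-d,0,\ldots,0)$, exactly as in the target. Crucially, all of these operations are confined to the first $d$ columns and the first $d-1$ rows, so they do not disturb the $t_{i,j}$ block at all.

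First I would diagonalize $M$. Working from the top: Row $1$ of $M$ is $(2,-1,0,\ldots)$. Add $\tfrac12$ of Row $1$ to Row $2$? No — to stay over $\bZ$, instead proceed from the bottom or use the standard trick: add Column $1$ to Column $2$ after suitable steps. Concretely, the cleanest route is column reduction followed by row reduction. Add Column $2$ to Column $1$, then Column $3$ to Column $2$, and so on; this is the standard continuant manipulation that turns the tridiagonal $[2,-1]$ pattern into something where each successive pivot can be normalized to $1$. After these operations on the first $d-1$ rows, one obtains $\mathbb{I}_{d-2}$ in the top-left $(d-2)\times(d-2)$ block, and Row $d-1$ of the $M$-block becomes $(0,\ldots,0,d,1-d)$ sitting in columns $d-1$ and $d$: this is because the determinant of the $(d-1)\times(d-1)$ leading minor of $M$ is $d$, and the cokernel structure forces the last pivot to carry the factor $d$ together with the off-diagonal $1-d$ coming from the final column. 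I would verify the precise sequence by induction on $d$, checking the base case $d=2$ (where $M=[2,-1]$ reduces to $[d,1-d]=[2,-1]$, already in the stated form with no $\mathbb{I}_{d-2}$ block) and then peeling off one row/column at a time.

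Next, having produced $\mathbb{I}_{d-2}$ in the first $d-2$ rows, I would use those pivot rows to clear the entries $t_{i,1},\ldots,t_{i,d-2}$ in the last $x$ rows — subtract the appropriate integer multiples of the pivot rows. Since $y\ge 2$, columns $d-1$ and $d$ are the first two "$t$-columns" in the target labelling, and after clearing, the last $x$ rows retain entries only in columns $d-1,d,\ldots,d+y-2$, which is exactly the $t$-block in the conclusion (the relabelling $t_{i,1},\ldots,t_{i,y}$ in the target corresponds to these surviving columns). The row $(0,\ldots,0,d,1-d,0,\ldots,0)$ is left untouched since the operations clearing the $t$'s use only the $\mathbb{I}_{d-2}$ rows, and those rows have zeros in columns $d-1$ and $d$. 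One must also check that clearing $t_{i,1},\ldots,t_{i,d-2}$ does not reintroduce nonzeros in earlier columns — it does not, because the pivot rows of $\mathbb{I}_{d-2}$ have a single nonzero entry each.

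The main obstacle is bookkeeping: getting the tridiagonal reduction of $M$ to land in precisely the claimed normal form, with the factor $d$ appearing in the right position and the off-diagonal entry exactly $1-d$ (rather than, say, $d-1$ or $-1$), and confirming that the column operations used stay within the first $d$ columns so the $t_{i,j}$ are genuinely inert. I expect this is handled exactly as in the proof of Lemma \ref{cl:prime+comps} (the excerpt explicitly says "under a similar argument"), namely: induct, at each stage use one pivot to absorb a $-1$ into the next diagonal $2$, turning it into a $1$ after a column addition, and track the accumulating determinant. Once the $M$-block is in the desired shape the rest is immediate. I would present the $M$-reduction as an inductive sublemma and then state the clearing of the $t$-block as a one-line consequence.
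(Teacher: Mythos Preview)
Your inductive strategy matches the paper's proof exactly, but you have misread the block structure, and this misreading creates a real gap. In the source matrix the bottom $x$ rows are \emph{already} zero in columns $1,\dots,d-2$; the entries $t_{i,1}$ and $t_{i,2}$ sit in columns $d-1$ and $d$, overlapping with the last two columns of $M$. Two consequences: first, there is nothing to ``clear'' below the $\mathbb{I}_{d-2}$ block, so that entire step is vacuous. Second, and more seriously, your claim that operations ``confined to the first $d$ columns and the first $d-1$ rows \dots\ do not disturb the $t_{i,j}$ block at all'' is false. Any column operation that adds a multiple of column $d-1$ or $d$ to another column, or adds anything nonzero into column $d-1$ or $d$, alters $t_{i,1}$ or $t_{i,2}$ in the bottom rows. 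Your sketched sequence ``add Column $j+1$ to Column $j$'' does exactly this once $j$ reaches $d-2$. And the $t$'s are \emph{not} relabelled between source and target: the lemma asserts they are literally unchanged, and the application in Lemma~\ref{lem:ab+bc+ac+1} relies on that.

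The paper's inductive step avoids the problem cleanly. Passing from $d=k$ to $d=k+1$, one applies the inductive hypothesis to the first $k-1$ rows (treating row $k$ of $M$ together with the $t$-rows as the new bottom block --- legitimate, since row $k$ of $M$ has zeros in columns $1,\dots,k-2$). This leaves rows $k-1$ and $k$ reading $(0,\dots,0,k,1-k,0,\dots)$ and $(0,\dots,0,-1,2,-1,0,\dots)$ in columns $k-1,k,k+1$. Then add $k-1$ copies of row $k$ to row $k-1$, then the updated row $k-1$ back to row $k$; this produces a $1$ in position $(k-1,k-1)$, the only nonzero in that column, and row $k$ becomes $(0,\dots,0,k+1,-k,0,\dots)$. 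The final column clean-up of row $k-1$ uses only column $k-1$ as pivot, and for $d=k+1$ the $t$'s begin in column $k$, so column $k-1$ is zero throughout the bottom rows and nothing moves. Your induction can be made to work the same way, but you must restrict column operations to those whose source column is zero in the bottom block.
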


\begin{proof}
When $d=2$, the result is trivial. 
Assume that the statement holds for $d=k$, and consider $L(k+1)$. Then $L(k+1)$ reduces to 
$$\left[\begin{tabular}{ccc|cccccc}
\multicolumn{3}{c|}{\multirow{3}{*}{{\large$\mathbb{I}_{k-2}$}}}& $0$ &  $0$&  $0$&  $0$ & $\cdots$ & $0$ \\
\multicolumn{3}{c|}{}&$\vdots$&$\vdots$&$\vdots$&$\vdots$ &$\ddots$ &$\vdots$ \\
\multicolumn{3}{c|}{}& $0$ & $0$& $0$& $0$ & $\cdots$ & $0$ \\
\hline
$0$ & $\cdots$ & $0$& $k$ &  $1-k$ & $0$& $0$& $\cdots$ & $0$ \\
$0$ & $\cdots$ & $0$& $-1$ &  $2$ & $-1$& $0$& $\cdots$ & $0$ \\
$0$ & $\cdots$ & $0$& $0$ & $t_{1,1}$ & $t_{1,2}$& $t_{1,3}$& $\cdots$ & $t_{1,y}$\\
$\vdots$ & $\ddots$ & $\vdots$& $\vdots$ & $\vdots$ &  $\vdots$ & $\ddots$ & $\vdots$\\
$0$ & $\cdots$ & $0$ & $0$& $t_{x,1}$ & $t_{x,2}$&$t_{x,3}$& $\cdots$ & $t_{x,y}$
\end{tabular}\right]$$
Adding $k-1$ copies of Row $k$ to Row $k-1$ followed by adding one copy of the updated Row $k-1$ to Row $k$ yields a $1$ in entry $(k-1,k-1)$ and this is now the only non-zero in Column $k-1$. The result follows.
\end{proof}

\begin{lemma}
\label{lem:ab+bc+ac+1}
Suppose that $1\leq a,b,c$.  Then there exists spherical latin bitrade whose canonical group is isomorphic to 
$\bZ_{ab+bc+ac+1}\oplus\bZ_{ab+bc+ac+1}.$
\end{lemma}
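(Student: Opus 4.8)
The plan is to realise $\bZ_n\oplus\bZ_n$, where $n=ab+bc+ca+1$, as the abelian sandpile group of a suitable directed Eulerian spherical embedding and then invoke Proposition~\ref{prop:simple}. Concretely, I would construct a connected digraph $D_{a,b,c}$ carrying a directed Eulerian spherical embedding, with no loops, no cut vertices, and whose underlying graph has no $2$-edge-cut, such that $\cS(D_{a,b,c})\cong\bZ_n\oplus\bZ_n$; the desired spherical latin bitrade then exists by Proposition~\ref{prop:simple}. Since the target group is symmetric in $a$, $b$ and $c$, I would build $D_{a,b,c}$ from a small ``core'' on a bounded number of vertices together with three structurally identical arms, one governed by each of $a$, $b$ and $c$, in the spirit of the digraphs $D_{m;a_1,\ldots,a_k}$ and $D^n_{p;a_1,\ldots,a_k}$ of Section~\ref{sec:exist}: each arm a short path (or bidirectional multi-edge gadget) contributing, via Lemma~\ref{lem:121-reduction}, a row of the form $[\,d\mid 1-d\,]$ with $d\in\{a,b,c\}$, together with a single directed arc in the core to account for the additive ``$+1$''. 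Placing the core at the centre of the sphere with the three arms occupying the three surrounding sectors makes the embedding manifestly planar, and the connectivity hypotheses of Proposition~\ref{prop:simple} can then be checked directly from the picture, exactly as in the discussion accompanying Figures~\ref{fig:Dma_embedding} and~\ref{fig:Dman_construction}.

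For the computation, I would fix an ordering of $V(D_{a,b,c})$ listing the three arms in turn and then the core, delete the row and column of a chosen core vertex, and write down the reduced asymmetric Laplacian $\cL'(D_{a,b,c})$. Each arm then appears as a tridiagonal block of the shape $M$ of Lemma~\ref{lem:121-reduction}, linked to the core by a bounded number of extra entries, so three applications of that lemma collapse the arms; clearing the resulting identity blocks leaves a small integer matrix $\mathcal{N}_{a,b,c}$ whose entries are affine in $a$, $b$ and $c$. It then remains to carry out one finite Smith-Normal-Form reduction: elementary row and column operations over $\bZ$ should bring $\mathcal{N}_{a,b,c}$ to $\operatorname{diag}(1,\ldots,1,n,n)$, in the same style as the base case $\cL'(D_{m;a_1})=\left[\begin{smallmatrix} m & -m+1\\ -m & m+a_1-1\end{smallmatrix}\right]$ handled in the proof of Lemma~\ref{lem:composites}. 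This yields $\cS(D_{a,b,c})\cong\bZ_n\oplus\bZ_n$, completing the proof.

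The genuinely hard part is the design of the core. It must simultaneously (i) admit a planar directed Eulerian embedding once the three arms are attached, (ii) respect the connectivity conditions of Proposition~\ref{prop:simple} --- in particular, the ban on $2$-edge-cuts forbids attaching the arms through ``thin'' bridges --- and (iii) produce, after the reduction above, precisely the quantity $ab+bc+ca+1$ \emph{and} leave the $2$-rank equal to two, rather than collapsing to the cyclic group $\bZ_{n^2}$ or expanding to rank three. Because $ab+bc+ca$ is a genuinely quadratic symmetric expression in the parameters, the core must interact with all three arms in a ``triangular'' fashion so that each of the cross terms $ab$, $bc$ and $ca$ is generated, and arranging this while keeping the final reduced form equal to $n\,\mathbb{I}_2$ (up to identity blocks) is where essentially all the effort lies. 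Once such a core is found, the remaining verifications --- that the embedding is spherical, that the connectivity conditions hold, and that $\mathcal{N}_{a,b,c}$ has Smith Normal Form $\operatorname{diag}(1,\ldots,1,n,n)$ --- are routine, the last amounting to checking that after one reduction step $n$ divides every remaining entry while the determinant of $\mathcal{N}_{a,b,c}$ has absolute value $n^2$.
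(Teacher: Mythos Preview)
Your outline matches the paper's approach essentially step for step: build a digraph $D_{a,b,c}$ with three arms governed by $a$, $b$, $c$, verify the embedding and connectivity hypotheses of Proposition~\ref{prop:simple}, apply Lemma~\ref{lem:121-reduction} three times to collapse the arms, and finish by computing the Smith Normal Form of a small residual matrix. The one piece you explicitly leave open---the design of the core---turns out to be simpler than your discussion suggests: the paper takes a \emph{single} vertex $\delta$ joined by one bidirectional edge to the near end of each arm, and places the ``triangular'' interaction at the \emph{far} ends, joining $\alpha_a,\beta_b,\gamma_c$ pairwise by bidirectional multi-edges of multiplicities $c$, $a$, $b$ respectively (so the arm of length $a$ faces the edge of weight $a$, etc.). There is no dedicated arc producing the ``$+1$''; after the three applications of Lemma~\ref{lem:121-reduction} one is left with a $6\times6$ integer matrix whose Smith Normal Form is $\operatorname{diag}(1,1,1,1,n,n)$, and the ``$+1$'' emerges from that computation.
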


\begin{proof}
Without loss of generality we may assume that $1\leq a\leq b\leq c$. Define $D_{a,b,c}$ to be the digraph of order $a+b+c+1$ with vertex set $\{\alpha_1,\alpha_2,\ldots,\alpha_a,\beta_1,\beta_2,\ldots,\beta_b,\gamma_1,\gamma_2,\break\ldots,\gamma_c,\delta\}$ and
\begin{itemize}[noitemsep]
\item for $1\leq i\leq a-1$ an arc from $\alpha_i$ to $\alpha_{i+1}$ and an arc from $\alpha_{i+1}$ to $\alpha_{i}$;
\item for $1\leq i\leq b-1$ an arc from $\beta_i$ to $\beta_{i+1}$ and an arc from $\beta_{i+1}$ to $\beta_{i}$;
\item for $1\leq i\leq c-1$ an arc from $\gamma_i$ to $\gamma_{i+1}$ and an arc from $\gamma_{i+1}$ to $\gamma_{i}$;
\item for each $\iota\in\{\alpha,\beta,\gamma\}$ an arc from $\delta$ to $\iota_1$ and from $\iota_1$ to $\delta$; and 
\item $a$ arcs from $\beta_b$ to $\gamma_c$ and from $\gamma_c$ to $\beta_b$; $b$ arcs from $\alpha_a$ to $\gamma_c$ and from $\gamma_c$ to $\alpha_a$; and $c$ arcs from $\alpha_a$ to $\beta_b$ and from $\beta_b$ to $\alpha_a$.
\end{itemize}
Note that $D_{a,b,c}$ has a directed Eulerian spherical embedding, see Figure \ref{fig:rank2}, and that $D_{a,b,c}$ satisfies the connectivity conditions of Proposition \ref{prop:simple}. Hence, there exists a spherical latin bitrade whose canonical group is isomorphic to $\cS(D_{a,b,c})$.

\begin{figure}[!h]

\begin{center}
\scalebox{0.9}
{\begin{tikzpicture}[fill=gray!50, scale=1.1, vertex/.style={circle,inner sep=2,fill=black,draw}, dot/.style={circle,inner sep=0.7,fill=black,draw}]

\coordinate (aa) at (4,6);
\coordinate (aa1) at (4,5);
\coordinate (a1) at (4,3);

\coordinate (bb) at (0,0);
\coordinate (bb1) at (1,0.5);
\coordinate (b1) at (3,1.5);

\coordinate (cc) at (8,0);
\coordinate (cc1) at (7,0.5);
\coordinate (c1) at (5,1.5);

\coordinate (d) at (4,2);



\draw [thick,->-=.6, -<-=.4] (bb) -- (bb1);
\draw [thick,->-=.6, -<-=.4] (d) -- (b1);

\draw [thick,->-=.6, -<-=.4] (aa) -- (aa1);
\draw [thick,->-=.6, -<-=.4] (d) -- (a1);

\draw [thick,->-=.6, -<-=.4] (cc) -- (cc1);
\draw [thick,->-=.6, -<-=.4] (d) -- (c1);

\draw [thick,->-=.6, -<-=.4] (bb) -- (cc);
\draw [thick,->-=.6, -<-=.4] (aa) to [bend right=20](bb);
\draw [thick,->-=.6, -<-=.4] (cc) to [bend right=20](aa);

\draw [thick] (aa1) -- (4,4.5);
\draw [thick] (a1) -- (4,3.5);

\draw [thick] (bb1) -- (1.5,0.75);
\draw [thick] (b1) -- (2.5,1.25);

\draw [thick] (cc1) -- (6.5,0.75);
\draw [thick] (c1) -- (5.5,1.25);


\node at (aa) [vertex,label=north:$\alpha_{a}$]{};
\node at (aa1) [vertex,label=east:$\alpha_{a-1}$]{};
\node at (a1) [vertex,label=east:$\alpha_{1}$]{};

\node at (bb) [vertex,label=south west :$\beta_{b}$]{};
\node at (bb1) [vertex,label=north:$\beta_{b-1}$]{};
\node at (b1) [vertex,label=north:$\beta_{1}$]{};

\node at (cc) [vertex,label=south east:$\gamma_{c}$]{};
\node at (cc1) [vertex,label=north:$\gamma_{c-1}$]{};
\node at (c1) [vertex,label=north:$\gamma_{1}$]{};

\node at (d) [vertex,label=north east:$\delta$]{};

\node at (4,4.15) [dot]{};
\node at (4,4) [dot]{};
\node at (4,3.85) [dot]{};

\node at (6,1) [dot]{};
\node at (6.2,0.9) [dot]{};
\node at (5.8,1.1) [dot]{};

\node at (2,1) [dot]{};
\node at (1.8,0.9) [dot]{};
\node at (2.2,1.1) [dot]{};

\node at (4,-0.25) {$a$};
\node at (7,3.5) {$b$};
\node at (1,3.5) {$c$};


\end{tikzpicture}}
\end{center}
\caption{A directed Eulerian spherical embedding of  $D_{a,b,c}$.}
\label{fig:rank2}
\end{figure}
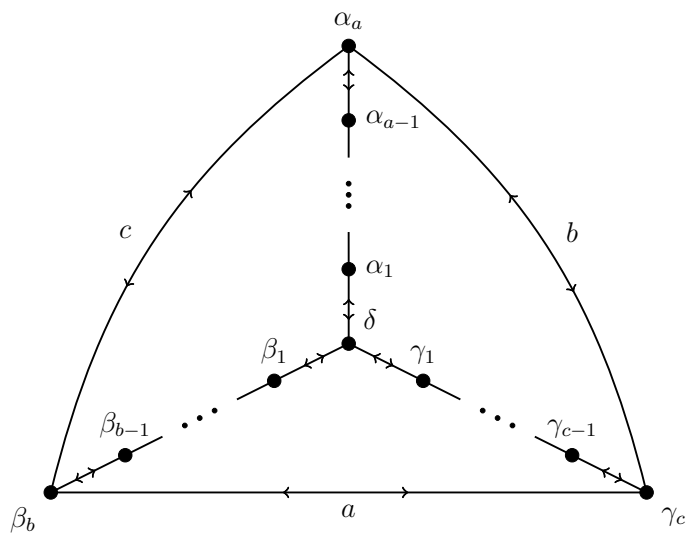

Suppose that we order the vertices of $D_{a,b,c}$ by 
$$\gamma_1,\gamma_2,\ldots,\gamma_{c-2},\gamma_{c-1},\gamma_{c}, \beta_1,\beta_2,\ldots,\beta_{b-2},\beta_{b-1},\beta_b,\alpha_1,\alpha_2,\ldots,\alpha_{a-2},\alpha_{a-1}, \alpha_a,\delta.$$
Let $\cL'(D_{a,b,c})$ be the reduced asymmetric Laplacian for $D_{a,b,c}$ obtained by removing the row and column corresponding to $\delta$.

When $a=b=c=1$, $\cL'(D_{1,1,1})=${\footnotesize$\begin{bmatrix}
3&-1&-1\\
-1&3&-1\\
-1&-1&3
\end{bmatrix}$}, which reduces to {\footnotesize$\begin{bmatrix}
1&0&0\\
0&4&0\\
0&0&4
\end{bmatrix}$}. 

Suppose that $2\leq a,b,c$. Consider $\cL'(D_{a,b,c})$, via three applications of Lemma \ref{lem:ab+bc+ac+1} and setting $a+b+c+1=t$, this reduces to 
$$\left[\begin{tabular}{ccc|cc|ccc|cc|ccc|cc}
\multicolumn{3}{c|}{\multirow{3}{*}{{\large$\mathbb{I}_{c-2}$}}}& $0$& $0$ &  $0$ & $\cdots$ & $0$ & $0$& $0$& $0$ & $\cdots$ & $0$   & $0$& $0$\\\multicolumn{3}{c|}{}&$\vdots$&$\vdots$&$\vdots$ &$\ddots$ &$\vdots$&$\vdots$&$\vdots$&$\vdots$&$\ddots$&$\vdots$&$\vdots$&$\vdots$\\
\multicolumn{3}{c|}{}& $0$ & $0$&  $0$ & $\cdots$ & $0$ &$0$&$0$&$0$& $\cdots$ & $0$   & $0$& $0$\\
\hline
$0$&$\cdots$&$0$&$c$&$1-c$&$0$&$\cdots$&$0$&$0$&$0$&$0$& $\cdots$ & $0$ & $0$& $0$\\
$0$&$\cdots$&$0$&$-1$&$t-c$&$0$&$\cdots$&$0$&$0$&$-a$& $0$ & $\cdots$ & $0$& $0$&$-b$\\
\hline
$0$&$\cdots$&$0$&$0$&$0$&\multicolumn{3}{c|}{\multirow{3}{*}{{\large$\mathbb{I}_{b-2}$}}}& $0$& $0$ &  $0$& $\cdots$ & $0$ & $0$& $0$ \\
$\vdots$&$\ddots$&$\vdots$&$\vdots$&$\vdots$&\multicolumn{3}{c|}{}&$\vdots$&$\vdots$&$\vdots$& $\ddots$ & $\vdots$ & $\vdots$& $\vdots$\\
$0$&$\cdots$&$0$&$0$&$0$&\multicolumn{3}{c|}{}& $0$ & $0$&  $0$& $\cdots$ & $0$ & $0$& $0$ \\
\hline
$0$&$\cdots$&$0$&$0$&$0$&$0$&$\cdots$&$0$&$b$&$1-b$&  $0$ & $\cdots$ & $0$ &$0$&$0$\\
$0$&$\cdots$&$0$&$0$&$-a$&$0$&$\cdots$&$0$&$-1$&$t-b$&  $0$ & $\cdots$ & $0$ &$0$&$-c$\\
\hline
$0$&$\cdots$&$0$&$0$&$0$&  $0$& $\cdots$ & $0$ & $0$& $0$ & \multicolumn{3}{c|}{\multirow{3}{*}{{\large$\mathbb{I}_{a-2}$}}} & $0$& $0$ \\
$\vdots$&$\ddots$&$\vdots$&$\vdots$&$\vdots$& $\vdots$& $\ddots$ & $\vdots$   &$\vdots$&$\vdots$&  \multicolumn{3}{c|}{}  & $\vdots$& $\vdots$\\
$0$&$\cdots$&$0$&$0$&$0$&  $0$& $\cdots$ & $0$  & $0$ & $0$&  \multicolumn{3}{c|}{} & $0$& $0$ \\
\hline
$0$&$\cdots$&$0$&$0$&$0$&$0$&$\cdots$&$0$&$0$&$0$& $0$& $\cdots$ & $0$ & $a$ &$ 1-a$\\
$0$&$\cdots$&$0$&$0$&$-b$&$0$&$\cdots$&$0$&$0$&$-c$&$0$&$\cdots$&$0$&$-1$&$t-a$\\
\end{tabular}\right].$$
Computing the Smith Normal form of 
$$\begin{bmatrix}
c&1-c&0&0&0&0\\
-1&t-c&0&-a&0&-b\\
0&0&b&1-b&0&0\\
0&-a&-1&t-b&0&-c\\
0&0&0&0&a&1-a\\
0&-b&0&-c&-1&t-a
\end{bmatrix}$$
we have that $\cS(D_{a,b,c})\cong \bZ_{ab+bc+ac+1}\oplus\bZ_{ab+bc+ac+1}$. 

The cases where $1=a<b\leq c$ and $1=a=b< c$ follow similarly.
\end{proof}

\begin{theorem}
\label{thm:rank2}
For $n, m\geq 2$, with one exception and a further three possible exceptions, there exists a spherical latin bitrade whose canonical group is isomorphic to $\bZ_{n}\oplus\bZ_{m}$.

The exceptions are as follows. There does not exist a spherical latin bitrade with canonical group isomorphic to $\bZ_{2}\oplus\bZ_{2}$. 
There may or may not exist a spherical latin bitrade with canonical group isomorphic to $\bZ_{3}\oplus\bZ_{3}$ or $\bZ_{5}\oplus\bZ_{5}$ or $\bZ_{r}\oplus\bZ_{r}$ for some $r$ greater than $10^{11}$.

Finally, if we assume the Generalised Riemann Hypothesis, then there exists a spherical latin bitrade with canonical group isomorphic to $\bZ_{r}\oplus\bZ_{r}$.
\end{theorem}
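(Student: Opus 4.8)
The plan is to reduce the problem to the Smith Normal Form of the target group and then to dispatch the possible shapes using the constructions already established, so that only groups of the form $\bZ_r\oplus\bZ_r$ remain; these I would handle with Lemma~\ref{lem:ab+bc+ac+1} together with a number-theoretic fact about representations of $xy+yz+zx$.

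First I would reduce. Since $\bZ_n\oplus\bZ_m\cong\bZ_{\gcd(n,m)}\oplus\bZ_{\operatorname{lcm}(n,m)}$, if $\gcd(n,m)=1$ the group is cyclic and is realised as a canonical group by the result of Cavenagh and Wanless quoted in the introduction; so I may assume $d:=\gcd(n,m)\ge 2$, set $e:=\operatorname{lcm}(n,m)$, and realise $\bZ_d\oplus\bZ_e$ with $2\le d\mid e$. In the subcase $d<e$ write $e=df$ with $f\ge 2$: if $d$ is composite, let $p$ be its smallest prime divisor (so $d/p\ge 2$ and $e/p=(d/p)f\ge 2$, with $d/p\mid e/p$) and apply Lemma~\ref{lem:composites} with $m=p$, $k=2$, $a_1=d/p$, $a_2=e/p$ to obtain $\bZ_d\oplus\bZ_e$; if $d=p$ is prime, apply Theorem~\ref{thm:primes_and_composites} with this $p$, $k=1$, $a_1=f$ and $n=1$ (legitimate as $1\le 1+2(f-1)$) to obtain $\bZ_p\oplus\bZ_{pf}=\bZ_d\oplus\bZ_e$. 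This leaves the diagonal case $d=e=:r$, i.e.\ $\bZ_r\oplus\bZ_r$. If $r$ is composite, pick the smallest prime $p\mid r$ and use Lemma~\ref{lem:composites} with $m=p$, $k=2$, $a_1=a_2=r/p\ge 2$. If $r=2$, no spherical latin bitrade has canonical group $\bZ_2\oplus\bZ_2$, by the $k=2$ case of Theorem~\ref{thm:non-existence}. So it remains to realise $\bZ_r\oplus\bZ_r$ for every odd prime $r$.

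For an odd prime $r$, Lemma~\ref{lem:ab+bc+ac+1} realises $\bZ_r\oplus\bZ_r$ whenever $r-1=ab+bc+ca$ for some $a,b,c\ge 1$. Here I would invoke the theorem of Borwein and Choi: the positive integers \emph{not} expressible as $xy+yz+zx$ with $x,y,z\ge 1$ are precisely $1,2,4,6,10,18,22,30,42,58,70,78,102,130,190,210,330,462$, together with at most one further integer, which if it exists exceeds $10^{11}$. Hence Lemma~\ref{lem:ab+bc+ac+1} already covers every odd prime $r$ outside $\{3,5,7,11,19,23,31,43,59,71,79,103,131,191,211,331,463\}$ and a possible prime $r>10^{11}$. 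For each of the finitely many primes $r\in\{7,11,19,23,31,43,59,71,79,103,131,191,211,331,463\}$ I would then exhibit by hand (or by a finite search, $r^2$ being small) a directed Eulerian spherical embedding obeying the connectivity hypotheses of Proposition~\ref{prop:simple} and having abelian sandpile group $\bZ_r\oplus\bZ_r$, thereby realising these groups as well. This residual step is the main obstacle, since there is no single parametrised family covering these primes. What then survives are $\bZ_3\oplus\bZ_3$, $\bZ_5\oplus\bZ_5$, and — should the hypothetical nineteenth non-representable integer exist and be one less than a prime — a single $\bZ_r\oplus\bZ_r$ with $r>10^{11}$; while $\bZ_2\oplus\bZ_2$ is genuinely impossible. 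Finally, under the Generalised Riemann Hypothesis the Borwein--Choi list is complete, so every prime $r>463$ has $r-1$ representable as $xy+yz+zx$ and Lemma~\ref{lem:ab+bc+ac+1} realises $\bZ_r\oplus\bZ_r$; this yields the last sentence of the theorem.
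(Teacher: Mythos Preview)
Your proposal is correct and follows essentially the same route as the paper: reduce the coprime case to cyclic, handle unequal invariant factors via Lemma~\ref{lem:composites} and Theorem~\ref{thm:primes_and_composites}, and treat the diagonal $\bZ_r\oplus\bZ_r$ with Lemma~\ref{lem:ab+bc+ac+1} and the Borwein--Choi classification. The one point worth noting is that the paper dispatches the fifteen residual primes $7,11,\ldots,463$ via two small parametrised families of digraphs (with sandpile groups $\bZ_{6m+5}\oplus\bZ_{6m+5}$ and $\bZ_{3m+1}\oplus\bZ_{3m+1}$ for suitable $m$), not an unstructured search, so the step you flag as the main obstacle has a cleaner resolution than you anticipate.
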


\begin{proof}
If $n$ and $m$ are coprime, then $\bZ_n\oplus\bZ_m\cong\bZ_{nm}$ and the result follows from \cite{CavWan} (it also follows from Lemma \ref{lem:composites} with $k=1$). So assume that $n$ and $m$ are not coprime, that is we are in the rank 2 case.

Suppose that $n\neq m$. If $n$ and $m$ are both composite, then the result follows from Theorem \ref{thm:composites}. So suppose that $n$ is prime and $m$ is composite. Then as $n$ and $m$ are not coprime $m=kn$ for some $k>1$ and the result follows from Theorem \ref{thm:primes_and_composites}.

So, suppose that $n=m$. If there exist $a,b,c\geq 1$ such that $ab+ac+bc+1=n$, then by Lemma \ref{lem:ab+bc+ac+1} there exits a spherical latin bitrade whose canonical group is isomorphic to $\bZ_n\oplus\bZ_n$. In \cite{BorCho} Borwein and Choi proved that there are at most nineteen integers that are not of the form $ab+ac+bc+1$ where $a,b,c\geq 1$. The first eighteen are: $2$, $3$, $5$, $7$, $11$, $19$, $23$, $31$, $43$, $59$, $71$, $79$, $103$, $131$, $191$, $211$, $331$ and $463$. The nineteenth is greater than $10^{11}$ and is not an exception if the Generalised Riemann Hypothesis is assumed. For $n\in\{7,11,19,23,31,43,59,71,79,103,131,191,211,331,463\}$ directed Eulerian spherical embeddings whose underlying digraphs satisfy the connectivity conditions of Proposition \ref{prop:simple} and  with abelian sandpile groups isomorphic to $\bZ_n\oplus\bZ_n$ are given in Figures \ref{fig:6m+5} and \ref{fig:6m+1}.\footnote{The families of indicated in Figures \ref{fig:6m+5} and \ref{fig:6m+1} generalise to give abelian sandpile groups isomorphic to $\bZ_{6m+5}\oplus\bZ_{6m+5}$, for all $m\geq 1$ and $\bZ_{3m+1}\oplus\bZ_{3m+1}$, for all $m\geq 1$, respectively. However, we do not require these more general results to prove Theorem \ref{thm:rank2}.}
\end{proof}

\begin{figure}
\begin{center}
\scalebox{0.9}
{\begin{tikzpicture}[fill=gray!50, scale=1, vertex/.style={circle,inner sep=2,fill=black,draw}, dot/.style={circle,inner sep=0.7,fill=black,draw}]

\coordinate (a1) at (2,1.5);
\coordinate (am1) at (4,1.5);
\coordinate (am) at (5,1.5);
\coordinate (b) at (0,2.5);
\coordinate (c) at (2,3);
\coordinate (d) at (0,0.5);
\coordinate (e) at (2,0);
\coordinate (f) at (1,1.5);


\draw [thick,->-=.6, -<-=.4] (f) -- (c);
\draw [thick,->-=.6, -<-=.4] (b) -- (c);
\draw [thick,->-=.6, -<-=.4] (b) -- (d);
\draw [thick,->-=.6, -<-=.4] (b) -- (f);
\draw [thick,->-=.6, -<-=.4] (d) -- (f);
\draw [thick,->-=.6, -<-=.4] (d) -- (e);
\draw [thick,->-=.6, -<-=.4] (e) -- (f);
\draw [thick,->-=.6, -<-=.4] (a1) -- (f);
\draw [thick,->-=.6, -<-=.4] (am1) -- (am);

\draw [thick,->-=.6, -<-=.4] (c) to [bend left=20](am);
\draw [thick,->-=.6, -<-=.4] (e) to [bend right=20](am);

\draw [thick] (a1) -- (2.5,1.5);
\draw [thick] (am1) -- (3.5,1.5);


\node at (a1) [vertex,label=north:$\alpha_{1}$]{};
\node at (am1) [vertex,label=north:$\alpha_{m-1}$]{};
\node at (am) [vertex,label=east:$\alpha_{m}$]{};

\node at (b) [vertex]{};
\node at (c) [vertex]{};
\node at (d) [vertex]{};
\node at (e) [vertex]{};
\node at (f) [vertex]{};

\node at (2.85,1.5) [dot]{};
\node at (3,1.5) [dot]{};
\node at (3.15,1.5) [dot]{};


\node at (-0.5,1.5){$m$};

\end{tikzpicture}}
\end{center}
\caption{
Directed Eulerian spherical embedding of a digraph with abelian sandpile group isomorphic to $\bZ_{6m+5}\oplus\bZ_{6m+5}$, when $m\in\{1,3,9,11,21,31,36\}$.
}
\label{fig:6m+5}
\end{figure}
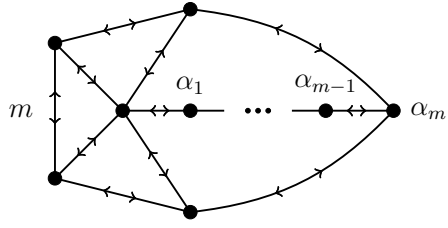

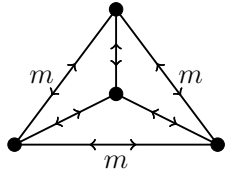
\begin{figure}
\begin{center}
\scalebox{0.9}
{\begin{tikzpicture}[fill=gray!50, scale=1, vertex/.style={circle,inner sep=2,fill=black,draw}, dot/.style={circle,inner sep=0.7,fill=black,draw}]

\coordinate (a) at (1.5,0.75);
\coordinate (b) at (3,0);
\coordinate (c) at (1.5,2);
\coordinate (d) at (0,0);


\draw [thick,->-=.6, -<-=.4] (a) -- (b);
\draw [thick,->-=.6, -<-=.4] (a) -- (c);
\draw [thick,->-=.6, -<-=.4] (a) -- (d);
\draw [thick,->-=.6, -<-=.4] (b) -- (c);
\draw [thick,->-=.6, -<-=.4] (d) -- (c);
\draw [thick,->-=.6, -<-=.4] (d) -- (b);


\node at (a) [vertex]{};
\node at (b) [vertex]{};
\node at (c) [vertex]{};
\node at (d) [vertex]{};


\node at (1.5,-0.25){$m$};
\node at (2.6,1){$m$};
\node at (0.4,1){$m$};

\end{tikzpicture}}
\end{center}
\caption{Directed Eulerian spherical embedding of a digraph with abelian sandpile group isomorphic to $\bZ_{3m+1}\oplus\bZ_{3m+1}$, when $n\in\{2,6,10,14,26,34,110,154\}$.
}
\label{fig:6m+1}
\end{figure}

\subsection{Questions}

We conclude with three questions for future consideration. The first two address the remaining cases to be considered in order to resolve Question \ref{ques:main}.

\begin{question}
Let $p\neq 2$ be a prime, $n\geq 3$ if $p>7$ and $n\geq 2$ if $p=3$ or $5$; does there exist a spherical latin bitrade with canonical group is isomorphic to $\bZ_p^n$?
\end{question}

\begin{question}
Let $p$ be a prime and let $2\leq a_1,a_2,\ldots,a_k$. If $n>1+2\sum_{i=1}^k(a_i-1)$, does there exist a spherical latin bitrade with canonical group is isomorphic to $$\bZ_p^n\oplus\left(\bigoplus_{i=1}^k\mathbb{Z}_{pa_i}\right)?$$
\end{question}

Our final question arises naturally in response to the non-existence result Theorem \ref{thm:non-existence}. For a separated, connected latin bitrade $(A,B)$ of genus greater than zero, the group $\cA_W$  is isomorphic to $\bZ
\oplus\bZ\oplus \cC$, but the minimal abelian representation (if one exists) is now a quotient of $\cC$, \cite[Theorem 6]{BlackburnTMcC}. Hence, we ask the following. 

\begin{question}
Does there exist a family of separated, connected latin bitrades for which  the minimum abelian representation of one (or both) of the partial latin squares is isomorphic to $\mathbb{Z}_2^k$ for arbitrary $k$? If so does such a family exist for a fixed genus?
\end{question}

\subsection*{Acknowledgements}
The authors express their thanks to the London Mathematical Society for a grant which enabled this research to by undertaken.

\end{document}